\makeatletter \@addtoreset{equation}{section} \makeatother
\theoremstyle{plain}
\newtheorem{thm}[equation]{Theorem}
\newtheorem{cor}[equation]{Corollary}
\newtheorem{lem}[equation]{Lemma}
\newtheorem{prop}[equation]{Proposition}
\theoremstyle{definition}
\newtheorem{conv}[equation]{Convention}
\newtheorem{nota}[equation]{Notation}
\newtheorem{rem}[equation]{Remark}
\newtheorem{sit}[equation]{}
\newcommand{\res}{\operatorname{res}}
\newcommand{\Aut}{{\operatorname{Aut}}}
\newcommand{\supp}{\operatorname{supp}}
\newcommand{\mult}{\operatorname{mult}}
\newcommand{\affcone}{\operatorname{cone}}
\newcommand{\Pic}{\operatorname{Pic}}
\newcommand{\Spec}{\operatorname{Spec}}
\newcommand{\Sing}{\operatorname{Sing}}
\newcommand{\Bs}{\operatorname{Bs}}
\newcommand{\pr}{\operatorname{pr}}
\newcommand{\Div}{\operatorname{Div}}
\newcommand{\qq}{\mathbin{\sim_{\scriptscriptstyle{\Q}}}}
\newcommand{\A}{{\mathbb A}}
\newcommand{\PP}{{\mathbb P}}
\newcommand{\Q}{{\mathbb Q}}
\newcommand{\Z}{{\mathbb Z}}
\newcommand{\N}{{\mathbb N}}
\newcommand{\G}{{\mathbb G}}
\def\cL{{\mathcal L}}
\def\cL{{\mathscr L}}
\title[]{Unipotent group actions
on del Pezzo cones}
\author{Takashi Kishimoto}
\address{Department of Mathematics,
Faculty of Science, Saitama University, Saitama 338-8570, Japan}
\email{tkishimo@rimath.saitama-u.ac.jp}
\author{Yuri Prokhorov}
\address{Department
of Algebra, Faculty of Mathematics, Moscow State University,
Moscow 117234, Russia \quad and \quad
Laboratory of Algebraic Geometry, SU-HSE,
7 Vavilova Str., Moscow 117312, Russia
} \email{prokhoro@gmail.com}
\author{Mikhail Zaidenberg}
\address{Universit\'e
Grenoble I, Institute Fourier, UM 5582 CARS-UHF, B 74, 38402 St.\
Martin derri\`eres codex, France} \email{zaidenbe@ujf-grenoble.fr}
\thanks{
The first author was supported by a Grant-in-Aid for Scientific
Research of JSPS No. 24740003. The second author was partially
supported by RFBR grants No. 11-01-00336-a, the grant of Leading
Scientific Schools No. 4713.2010.1, Simons-IUM fellowship,
 and
AG Laboratory SU-HSE, RF government
grant ag. 11.G34.31.0023.
}
\begin{document}

\begin{abstract}
In our previous paper \cite{Kishimoto-Prokhorov-Zaidenberg} we
showed that for any del Pezzo surface $Y$ of degree $d \geqq 4$
and for any $r \geqq 1$, the affine cone
$X={\affcone}_{r(-K_Y)}(Y)$ admits an effective $\G_a$-action. In
particular, the group $\Aut(X)$ is infinite dimensional. In this
note we prove that for a del Pezzo surface $Y$ of degree $\le 2$
the generalized cones $X$ as above do not admit any non-trivial
action of a unipotent algebraic group.
\end{abstract}

\subjclass[2010]{Primary 14R20, 14J45; \ Secondary 14J50, 14R05 }
\keywords{affine cone, del Pezzo surface, additive group, group action}
 \maketitle

\section{Introduction}\label{-section-Introduction}

We are working over an algebraically closed field $\Bbbk$ of
characteristic $0$. Let $Y$ be a smooth projective variety with a
polarization $H$, where $H$ is an ample Cartier divisor. A
\emph{generalized affine cone} over $(Y,H)$ is the normal affine
variety
$$\affcone_{H}(Y)=\Spec \bigoplus_{\nu\ge 0} H^0(Y,\nu H)\,.$$ This variety
$\affcone_{H}(Y)$ is the usual affine cone over $Y$ embedded in a
projective space  $\PP^n$ by the linear system $|H|$ provided that
$H$ is very ample and the image of $Y$ in $\PP^n$ is projectively
normal.

In this paper we deal with a del Pezzo surface $Y$ and a
pluri-anticanonical divisor $H=-rK_Y$ on $Y$, where $r\ge 1$; we
call then $\affcone_{H}(Y)$ a {\em del Pezzo cone}. This is a
usual cone if $r\ge 4-d$ (see e.g. \cite[Theorem
8.3.4]{Dolgachev-topics}) and a generalized cone otherwise.

It is known \cite[3.1.13]{Kishimoto-Prokhorov-Zaidenberg} that for
any smooth rational surface there is an ample polarization such
that the associated affine cone admits an effective
${\G}_a$-action. Furthermore, for any del Pezzo surface of degree
$\geqq 4$ the corresponding del Pezzo cones $\affcone_{-rK_Y}(Y)$
($r\ge 1$) admit such an action ({\em loc.cit}). The latter holds
also for some smooth rational Fano threefolds with Picard number 1
\cite{Kishimoto-Prokhorov-Zaidenberg,
Kishimoto-Prokhorov-Zaidenberg-2011}. However, for  del Pezzo
surfaces of small degrees the consideration turns out to be more
complicated. It is unknown so far whether the affine cone over a
smooth cubic surface in $\PP^3$ admits a $\G_a$-action (cf.\
\cite[\S 4]{Kishimoto-Prokhorov-Zaidenberg}). In this paper we
investigate the cases $d=1$ and $d=2$. Our main result can be
stated as follows.

\begin{thm}\label{mainthm}
Let $Y$ be a del Pezzo surface of degree $d={K_Y}^2\le 2$. Then
for any $r\ge 1$ there is no non-trivial action of a unipotent
group on the generalized affine cone
\[
X_r= \affcone_{-rK_Y}(Y)=\Spec A,\quad \text{where}\quad A=
\bigoplus_{\nu\ge 0} H^0(Y,-\nu rK_Y)\,.
\]
\end{thm}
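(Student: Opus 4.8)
The plan is to show that $A$ admits no non-zero locally nilpotent derivation, which is equivalent to $X_r$ being rigid and clearly implies the theorem. A non-trivial action of a unipotent group on $X_r=\Spec A$ contains a non-trivial $\G_a$-subgroup, and hence produces a non-zero locally nilpotent derivation $\partial$ of $A$; so it suffices to rule out such a $\partial$. The cone structure endows $A$ with the natural $\N$-grading $A=\bigoplus_\nu A_\nu$, $A_\nu=H^0(Y,-\nu rK_Y)$, coming from the $\G_m$-action on $X_r$. I would decompose $\partial=\partial_{i_0}+\cdots+\partial_{i_1}$ into homogeneous components, where $\partial_i$ raises degree by $i$; since $A$ is finitely generated this decomposition is finite, and by the standard homogenization lemma the top component $\partial_{i_1}$ is again a non-zero locally nilpotent derivation. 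Thus I may assume $\partial$ is homogeneous, equivalently that the associated $\G_a$-action is normalized by the $\G_m$-action.

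Next I would invoke the cylinder correspondence established in \cite{Kishimoto-Prokhorov-Zaidenberg}: a homogeneous locally nilpotent derivation on the del Pezzo cone corresponds to a $(-rK_Y)$-polar cylinder on $Y$, that is, an open subset $U=Y\setminus\supp D\cong Z\times\A^1$ with $Z$ a smooth affine curve and $D$ an effective $\Q$-divisor satisfying $D\qq -rK_Y$. Rescaling by $1/r$ replaces $D$ by an effective divisor with the same support but $\qq -K_Y$, so the existence of such a cylinder does not depend on $r$. The entire theorem therefore reduces to the single geometric assertion that a del Pezzo surface of degree $d\le 2$ admits no $(-K_Y)$-polar cylinder.

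To rule out the cylinder I would pass to the rational pencil defined by the $\A^1$-fibration $U\to Z$. Resolving the induced rational map $Y\dashrightarrow \bar Z=\PP^1$ by a birational morphism $\sigma\colon\tilde Y\to Y$, I obtain a $\PP^1$-fibration $\tilde Y\to\PP^1$ whose general fiber $F$ is a smooth rational curve with $F^2=0$, and whose singular fibers, together with $\Exc(\sigma)$ and the strict transform of $\supp D$, account for all curves lying outside the cylinder. Writing $\bar F$ for the closure in $Y$ of a general fiber, the relation $D\qq -K_Y$ gives $-K_Y\cdot\bar F=D\cdot\bar F$, while $\bar F\subset U$ forces $\bar F$ to meet $\supp D$ only at the points compactifying $\A^1$ to $\PP^1$. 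The heart of the matter, and the step I expect to be hardest, is a careful bookkeeping of the degenerate fibers and the boundary components on $\tilde Y$: I must show that the constraints $D\qq -K_Y$, the smallness $(-K_Y)^2=d\le 2$, and the large Picard rank $\rho(Y)=10-d\ge 8$ are mutually incompatible, i.e.\ that no effective $D\qq -K_Y$ can carry a full vertical boundary of such a fibration. I would carry this out by analyzing the Zariski decomposition of the boundary and the dual graph of the fiber at infinity, excluding each admissible configuration by surface Mori theory; the cases $d=1$ and $d=2$ may well require separate combinatorial analyses tied to the explicit anticanonical models, namely the double cover of $\PP^2$ branched over a quartic for $d=2$ and the weighted hypersurface in $\PP(1,1,2,3)$ for $d=1$.
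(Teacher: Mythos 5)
Your reduction is sound and matches the paper's: a non-trivial unipotent action contains a $\G_a$-action, the homogenization argument (or, as the paper does, a direct citation of the criterion of \cite{Kishimoto-Prokhorov-Zaidenberg-criterion}) converts this into a $(-rK_Y)$-polar cylinder $U=Y\setminus\supp D$, and rescaling $D$ reduces everything to the single assertion that a del Pezzo surface of degree $d\le 2$ carries no $(-K_Y)$-polar cylinder. Up to that point you are on exactly the paper's track.

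The problem is that this assertion is the entire content of the theorem, and you do not prove it; you only announce that a ``careful bookkeeping of the degenerate fibers,'' a Zariski decomposition, and a case-by-case exclusion via the dual graph of the fiber at infinity should yield a contradiction. That is a genuine gap, not a routine verification: recall that for $d\ge 4$ such cylinders \emph{do} exist, so whatever argument you run must use $d\le 2$ in an essential and rather delicate way, and your sketch never identifies where that happens. The paper's actual mechanism is quite different from a combinatorial fiber analysis. It first shows that the pencil $\cL$ spanned by the fiber closures has a single base point $P$ (for $d\le 3$), and then proves a Key Lemma: the last exceptional divisor $S$ of the minimal resolution of $\Bs\cL$ is a section of the induced $\PP^1$-fibration, whence $a(S;D)=(K_W+D_W)\cdot l=-2$, so $(Y,D)$ is \emph{not log canonical} at $P$ and $\mult_P D>1$. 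The contradiction is then extracted by intersecting $D$ with anticanonical members through $P$ (using that members of $|-K_Y|$ are reduced of arithmetic genus $1$, the Geiser/Bertini involutions, and Lemma \ref{lemma-e} saying no full anticanonical support lies in $\supp D$): for $d=1$ one gets $1=C^2=D\cdot C\ge\mult_PD\cdot\mult_PC>1$ immediately, while for $d=2$ one must blow up $P$, pass to the crepant pull-back on the resulting weak del Pezzo surface of degree $1$, and apply inversion-of-adjunction type estimates (\cite[Corollary 5.57]{Kollar-Mori-19988}) to the curve $C_2'$. None of these ingredients appears in your plan, and without the non-log-canonicity of $(Y,D)$ at $P$ it is not clear that the configuration count you propose ever closes up --- the boundary $D$ is an arbitrary effective $\Q$-divisor, not a reduced fiber sum, so Picard-rank and degeneate-fiber bookkeeping alone does not constrain its coefficients enough to reach a contradiction.
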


\begin{cor} In the notation as before assume that $d\le 2 $ and
$r\ge 4-d$ so that $X_r=\affcone_{-rK_Y}(Y)$ is a usual del Pezzo
cone. Then any algebraic subgroup $G\subset \Aut (X_r)$ is
isomorphic to a subgroup of $\G_m\times \Aut(Y)$, where $\Aut(Y)$
is finite.
\end{cor}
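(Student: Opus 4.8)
The plan is to deduce the corollary from Theorem \ref{mainthm} by first showing that every automorphism of $X_r$ is graded, and then reading off the structure of $\Aut(X_r)$ from that of $Y$. Since $Y$ is smooth of degree $d\le 2$ (in particular $Y\not\cong\PP^2$), the punctured cone $X_r\setminus\{0\}$ is smooth and fibres over $Y$ as a $\G_m$-bundle, while the vertex $0$ is the unique singular point of $X_r$; hence $0$ is fixed by every automorphism. The grading of $A$ defines the canonical action of $\G_m^{\mathrm{can}}$, for which $0$ is the unique attractive fixed point ($\lim_{t\to 0}t\cdot x=0$ for all $x$). First I would show $\G_m^{\mathrm{can}}$ is central in $\Aut(X_r)$: an automorphism $g$ conjugates $\G_m^{\mathrm{can}}$ to a $\G_m$-action fixing $0$, and the only abstract automorphisms of $\G_m$ are $t\mapsto t^{\pm 1}$; inversion would turn $0$ into a repulsive fixed point, contradicting that $g$ preserves the attractive locus. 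Thus every $g$ commutes with $\G_m^{\mathrm{can}}$, preserves its weight decomposition $A=\bigoplus_\nu A_\nu$, and so is graded. As $r\ge 4-d$, the divisor $-rK_Y$ is very ample and projectively normal, so $A$ is generated by $A_1=H^0(Y,-rK_Y)$ and $\Aut(X_r)\hookrightarrow\GL(A_1)$, with $\G_m^{\mathrm{can}}$ the scalar subgroup.

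Next I would pin down $G^0$. By the Levi--Mostow decomposition in characteristic $0$ one has $G^0=R_u(G^0)\rtimes L$ with $L$ reductive; Theorem \ref{mainthm} forbids nontrivial unipotent actions, so $R_u(G^0)=\{1\}$, and the same theorem kills the root $\G_a$-subgroups of the semisimple part of $L$, forcing $G^0$ to be a torus. Because every element is graded, passing to $\Proj A=Y$ gives a homomorphism $\rho\colon\Aut(X_r)\to\Aut(Y)$ with kernel exactly $\G_m^{\mathrm{can}}$ (an automorphism inducing the identity on $Y$ scales $A_1$ by a single factor, since $H^0(Y,\OOO_Y^\ast)=\Bbbk^\ast$). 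As $\Aut(Y)$ is finite for del Pezzo surfaces of degree $\le 2$ and $G^0$ is connected, $\rho(G^0)=\{1\}$; hence $G^0\subseteq\G_m^{\mathrm{can}}$, so $G^0$ is trivial or equals $\G_m$, and $\Aut^0(X_r)=\G_m^{\mathrm{can}}$.

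Finally I would assemble the embedding. The sequence $1\to\G_m^{\mathrm{can}}\to\Aut(X_r)\xrightarrow{\rho}\Aut(Y)$ exhibits $G$ as an extension of the finite group $\rho(G)\subseteq\Aut(Y)$ by the central torus $G\cap\G_m^{\mathrm{can}}$. To embed $G$ into $\G_m\times\Aut(Y)$ I would form $(\psi,\rho)\colon G\to\G_m\times\Aut(Y)$ for a character $\psi$ of $G$ restricting to an isomorphism on $G\cap\G_m^{\mathrm{can}}$, since then $\rho(g)=1$ and $\psi(g)=1$ together force $g=1$.

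The main obstacle is exactly the choice of $\psi$. The obvious candidate $\det\colon\GL(A_1)\to\G_m$ restricts to $\lambda\mapsto\lambda^{\dim A_1}$ on the scalars $\G_m^{\mathrm{can}}$, so $(\det,\rho)$ has kernel $\mu_{\dim A_1}$ and is not injective. Equivalently, one must show the central extension splits over $G$, i.e. $[G,G]\cap\G_m^{\mathrm{can}}=\{1\}$ --- a genuinely necessary condition, since every subgroup $H$ of $\G_m\times\Aut(Y)$ satisfies $[H,H]\cap(\G_m\times\{1\})=\{1\}$. Here I would invoke Schur's theorem (as $\G_m^{\mathrm{can}}\subseteq Z(G)$ and $G/G^0$ is finite, $[G,G]$ is finite) together with the finiteness of $\Aut(Y)$ to control the commutators, and would refine $\det$ by combining the determinant characters of the graded pieces $A_\nu$ so that the resulting central weight becomes $1$. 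Verifying that such a weight-one character exists is the crux of the argument.
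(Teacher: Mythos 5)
Your overall architecture (use Theorem \ref{mainthm} to eliminate unipotent pieces, reduce to graded automorphisms, then compare with $\Aut(Y)$) is sound, and your reduction of $G^0$ to a torus is correct. However, there are two genuine gaps.

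First, the claimed centrality of $\G_m^{\mathrm{can}}$ in $\Aut(X_r)$. Conjugating the canonical action by $g\in\Aut(X_r)$ produces \emph{some} $\G_m$-action on $X_r$ fixing the vertex, whose image is the subgroup $g\,\G_m^{\mathrm{can}}\,g^{-1}$; your appeal to ``the only automorphisms of $\G_m$ are $t\mapsto t^{\pm1}$'' is only available once you already know that this image coincides with $\G_m^{\mathrm{can}}$, i.e.\ that $\G_m^{\mathrm{can}}$ is normal in $\Aut(X_r)$ --- which is essentially what you are trying to prove. A priori a cone may carry several distinct $\G_m$-actions fixing the vertex, and the attractive-fixed-point observation does not rule this out. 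Note also that your claim, if established, would show $\Aut(X_r)\hookrightarrow\G_m\times\Aut(Y)$ and hence settle the Conjecture stated right after the Corollary, which the authors explicitly leave open; this should have been a warning sign. The paper sidesteps the issue: it only needs the statement for the given algebraic subgroup $G$, deduces from Theorem \ref{mainthm} that $G$ is reductive, and then invokes the linearization theorem for reductive subgroups of the automorphism group of a cone (Lemma 2.3.1 of \cite{Kishimoto-Prokhorov-Zaidenberg}) to conjugate $G$ into the graded automorphism group $\operatorname{Lin}(X_r)$. Some input of this kind is genuinely needed.

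Second, the final embedding step, which you yourself flag as ``the crux'' and leave unverified. There is no need to search for a character $\psi$ of central weight one, and the gcd condition on the integers $\nu\dim A_\nu$ that your refinement of $\det$ would require is neither checked nor obviously satisfied. The point you are missing is that the extension $1\to\G_m^{\mathrm{can}}\to\operatorname{Lin}(X_r)\to\Aut(Y)\to 1$ splits \emph{canonically}: the polarization $-rK_Y=\omega_Y^{\otimes(-r)}$ is functorially attached to $Y$, so every $\phi\in\Aut(Y)$ acts canonically on each $H^0(Y,-\nu rK_Y)$, giving a homomorphism $\Aut(Y)\to\operatorname{Lin}(X_r)$ that commutes with the scalar torus. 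Hence $\operatorname{Lin}(X_r)\simeq\G_m\times\Aut(Y)$ (Proposition 2.2.6 of \cite{Kishimoto-Prokhorov-Zaidenberg}), and the Corollary follows at once after $G$ has been placed inside $\operatorname{Lin}(X_r)$; no Schur-type analysis of $[G,G]\cap\G_m^{\mathrm{can}}$ is required.
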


\begin{proof}  As follows from Theorem \ref{mainthm} $G$ is a reductive group.
Thus by Lemma 2.3.1 and Proposition 2.2.6 in
\cite{Kishimoto-Prokhorov-Zaidenberg} there are an injection and an isomorphism
$$G\hookrightarrow {\rm Lin}(X_r)\simeq
\G_m\times {\rm Lin}(Y)\subset \G_m\times\Aut(Y)\,,$$
where the  group $\Aut(Y)$ is finite, see \cite{Dolgachev-topics}.\end{proof}

We suggest the following

\begin{sit} {\bf Conjecture.} {\em If $d\le 2 $ then for any $r\ge d-4$
the full automorphism group $\Aut (X_r)$ is a finite extension of
the multiplicative group $\G_m$.}\end{sit}

Likewise in
\cite{Kishimoto-Prokhorov-Zaidenberg-2011, Kishimoto-Prokhorov-Zaidenberg}
we use a geometric criterion
of existence of an effective
$\G_a$-action on the affine cone $\affcone_H (Y)$
(see \cite{Kishimoto-Prokhorov-Zaidenberg-criterion} and Theorem \ref{crit} below).

Sections \ref{-section-Preliminaries}, \ref{-section-del-Pezzo},  and
 \ref{-section-cylinders}
contain necessary preliminaries. Theorem \ref{mainthm} is proven
in section \ref{section-proof}. The proof proceeds as follows.
Assuming to the contrary that there exists a non-trivial unipotent
group action on $X_r=\affcone_{(-rK_Y)}(Y)$, there also exists an
effective $\G_a$-action on $X_r$. By Theorem \ref{crit} there is
an effective $\Q$-divisor $D$ on $Y$ such that $D \qq -K_Y$ and
$U=Y\setminus  D\cong Z \times \A^1$, where $Z$ is a smooth
rational affine curve. Such a principal open subset $U$ is called
in \cite{Kishimoto-Prokhorov-Zaidenberg} a {\em $(-K_Y)$-polar
cylinder}. One of the key points consists in an estimate for the
singularities of the pair $(Y,D)$. More precisely, we consider the
linear pencil $\cL$ on $Y$ generated by the closures of the fibers
of the projection $U\cong Z \times \A^1 \to Z$. Letting $S$ be the
last exceptional divisor appearing in the process of the minimal
resolution of the base locus of $\cL$ we compute the discrepancy
$a(S; D)$. Using this and some subtle geometrical properties of
the pair $(Y,D)$ we finally come to a contradiction.

\section{Criterion}\label{-section-Preliminaries}
Let $Y$ be a projective variety and $H$ be an ample divisor on
$Y$. Recall \cite{Kishimoto-Prokhorov-Zaidenberg} that an {\it
$H$-polar cylinder} in $Y$ is an open subset $U=Y\setminus \supp
(D)$ isomorphic to $Z \times \A^1$ for some affine variety $Z$,
where $D=\sum_i \delta_i \Delta_i$ with $\delta_i>0$ \ $\forall i$
is an effective $\Q$-divisor on $Y$ such that $q D$ is integral
and $qD\sim H$ for some $q\in\N$. Corollary 2.12 in
\cite{Kishimoto-Prokhorov-Zaidenberg-criterion}\footnote{Cf.\ also
\cite[3.1.9]{Kishimoto-Prokhorov-Zaidenberg}.} provides  the
following useful criterion of existence of an effective
$\G_a$-action on the affine cone.

\begin{thm}\label{crit}
Let $Y$ be a normal projective algebraic variety
with an ample polarization $H\in\Div (Y)$,
and let $X=\affcone_H (Y)$ be the
corresponding generalized affine cone. If $X$ is normal then
$X$ admits an effective $\G_a$-action if and only
if $Y$ contains an $H$-polar cylinder.
\end{thm}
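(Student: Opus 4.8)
The plan is to translate the problem into the language of locally nilpotent derivations (LNDs) and to exploit the $\G_m$-action coming from the grading of $A=\bigoplus_{\nu\ge0}A_\nu$, where $A_\nu=H^0(Y,\nu H)$, $X=\Spec A$ and $Y=\Proj A$. Since $\operatorname{char}\Bbbk=0$, effective $\G_a$-actions on the affine variety $X$ correspond bijectively to nonzero LNDs $\partial$ of $A$. The first step is a normalization: because $A$ is $\Z$-graded, I would replace an arbitrary LND by a homogeneous one, using the standard fact that the homogeneous component of highest (equivalently, lowest) degree of an LND on a graded domain is again an LND. Thus it suffices to treat a nonzero homogeneous LND $\partial$ of some degree $e$, equivalently a $\G_a$-action normalized by $\G_m$; then $\ker\partial$ is a graded subring and the ensuing constructions become $\G_m$-equivariant.

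For the implication ``cylinder $\Rightarrow$ action'' I would argue as follows. Writing $qD=\divis_0(\sigma)$ with $\sigma\in H^0(Y,H)=A_1$, the polar condition $qD\sim H$ gives $U=Y\setminus\supp D=D_+(\sigma)$ and $\Bbbk[U]=A[\sigma^{-1}]_0$. The projection $U\cong Z\times\A^1\to Z$ is a $\G_a$-fibration, hence yields an LND $\delta$ of $\Bbbk[U]$ with $\ker\delta=\Bbbk[Z]$. I would extend $\delta$ to a degree-$0$ derivation $\bar\delta$ of the Laurent-graded ring $A[\sigma^{-1}]=\Bbbk[U][\sigma,\sigma^{-1}]$ by imposing $\bar\delta(\sigma)=0$; since $\sigma\in\ker\bar\delta$ and $\delta$ is locally nilpotent, $\bar\delta$ is an LND. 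Finally, as $A$ is finitely generated, there is $N\gg0$ with $\sigma^{N}\bar\delta(A)\subseteq A$, and $\partial:=\sigma^{N}\bar\delta$ is a nonzero homogeneous LND of $A$ (locally nilpotent because $\sigma^{N}\in\ker\bar\delta$), which integrates to the desired effective $\G_a$-action on $X$. Here normality of $X$ is used only to guarantee $\Bbbk[U]=A[\sigma^{-1}]_0$ and $\Bbbk[Z]=\ker\delta$.

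For the converse ``action $\Rightarrow$ cylinder'' I start from a nonzero homogeneous LND $\partial$ of degree $e$ and choose a homogeneous local slice, i.e.\ a homogeneous $f$ with $h:=\partial f\in\ker\partial\setminus\{0\}$. The local slice theorem then gives $A[h^{-1}]=(\ker\partial)[h^{-1}][t]$ with $t=f/h$ and $\partial t=1$. Passing to $\Proj$, the affine chart $Y\setminus V(h)=\Spec A[h^{-1}]_0$ inherits from the $t$-direction an $\A^1$-fibration over $Z=\Spec\big((\ker\partial)[h^{-1}]_0\big)$; the goal is to recognize it as a genuine trivial cylinder $Z\times\A^1$ and to take $\supp D=V(h)$ as the boundary. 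To match the polarization, one notes $h\in A_m$ forces $V(h)\in|mH|$, so an effective $\Q$-divisor $D$ with $\supp D=V(h)$ and $qD\sim H$ can be extracted for a suitable $q$; the degree $e$ of $\partial$ governs how the slice $t$ sits with respect to the grading and therefore fixes the class of the boundary.

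The main obstacle is precisely this converse direction, and within it two points require care. First, the homogenization relies on the nontrivial fact that an extremal homogeneous component of an LND is again locally nilpotent; without it the $\G_a$-action cannot be made to interact with the grading. Second, and more seriously, after producing the $\A^1$-fibration on $Y\setminus V(h)$ one must verify that it is an \emph{honest} geometric cylinder --- that the base $Z$ is affine and the fibration is a product rather than merely generically trivial --- and that the resulting polar $\Q$-divisor satisfies $qD\sim H$, i.e.\ $D\qq\tfrac1q H$. Since $t$ has degree $-e$ and hence is not itself a function on $Y$ when $e\neq0$, the fibre coordinate on $Y\setminus V(h)$ must be assembled from $t$ and a degree-$e$ element, and controlling the linear class of the boundary in terms of $e$ and $h$ while extending the cylinder structure over the whole complement is the delicate heart of the argument. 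Normality of $X$ --- hence integral closedness of $A$ and factorial closedness of $\ker\partial$ --- is what keeps this divisorial bookkeeping under control.
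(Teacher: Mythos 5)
First, a point of calibration: the paper contains no proof of Theorem \ref{crit} at all --- it is imported verbatim from \cite{Kishimoto-Prokhorov-Zaidenberg-criterion} (Corollary 2.12), a separate paper devoted essentially to this single statement --- so your attempt has to be measured against that reference's argument. Your direction ``cylinder $\Rightarrow$ $\G_a$-action'' is correct and is the same construction used there: write $qD=\divis(\sigma)$ with $\sigma\in A_1$, identify $\Bbbk[U]$ with $A[\sigma^{-1}]_0$, extend the locally nilpotent derivation of $\Bbbk[U]$ coming from the $\A^1$-factor to $A[\sigma^{-1}]=\Bbbk[U][\sigma,\sigma^{-1}]$ by $\sigma\mapsto 0$, and clear denominators with $\sigma^N$; every step you indicate there is sound.

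The converse direction, however, has a genuine gap, and it sits exactly where you yourself place the ``delicate heart'': you name the difficulty but do not resolve it, and that difficulty is the real mathematical content of the theorem. Concretely, with a homogeneous local slice $f$ and $h=\partial f\in\ker\partial$ you get $A[h^{-1}]=C[t]$, where $C=(\ker\partial)[h^{-1}]$ and $\deg t=-e$; but the chart of $Y=\Proj A$ that must be shown to be a cylinder is $\Spec (A[h^{-1}])_0$, and $(A[h^{-1}])_0=\bigoplus_{k\ge 0}C_{ke}\,t^k$. When $e\neq 0$ this is not a polynomial ring over $C_0$ in any visible way: it is a graded ring assembled from rank-one $C_0$-modules $C_{ke}$, i.e.\ a priori only a twisted (Seifert-type) $\A^1$-fibration over $Z=\Spec C_0$, with no reason to be a product. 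Closing the gap requires showing that $e$ is compatible with the degree group of $C$, and then \emph{shrinking} the prospective cylinder --- replacing $h$ by $hg$ for a suitable homogeneous $g\in\ker\partial$ --- so that over the smaller affine base all the modules $C_{ke}$ become simultaneously free with compatible generators (in Dolgachev--Pinkham--Demazure terms, so that the associated $\Q$-divisor on $Z$ becomes integral and principal). None of this appears in your proposal. Two further slips in the same direction: your recipe $\supp D=V(h)$ collapses in the degenerate case $h\in\Bbbk^{*}$ (e.g.\ $X=\A^2$ as the cone over $\PP^1$, $\partial=\partial/\partial y$, slice $y$, $h=1$), so $h$ must first be multiplied by a kernel element of positive degree; and since $\divis(h)\in|mH|$ with $m=\deg h$, what you obtain is only $D\qq\lambda H$ for some $\lambda\in\Q_{>0}$ --- this matches $H$-polarity in the $\Q$-linear-equivalence sense in which the paper actually uses it (Section \ref{-section-Preliminaries}), but your claim that ``$qD\sim H$ can be extracted for a suitable $q$'' is not justified under the literal definition.
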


We apply this criterion to a del Pezzo surface $Y$ of degree
$d\le 2$ and a generalized cone $$X_r=\Spec \bigoplus_{\nu\ge 0}
H^0(Y,-\nu rK_Y)$$  associated with $H=-rK_Y$, where $r\ge 1$. It
follows, in particular, that if the cone $X_r$ admits an effective
$\G_a$-action then $Y$ contains a cylinder $Y\setminus\supp D$
with $qD\sim -rK_Y$. Hence $\frac{q}{r}D\sim_\Q -K_Y$. Replacing
$D$ by $\frac{q}{r}D$ we assume in the sequel that $D\sim_\Q-K_Y$.
This assumption leads finally to a contradiction, which proves
Theorem \ref{mainthm}.

\section{Preliminaries on weak del Pezzo surfaces}\label{-section-del-Pezzo}
A smooth projective surface $Y$ is called a {\em del Pezzo
surface} if the anticanonical divisor $-K_Y$ is ample, and a {\em
weak del Pezzo surface} if $-K_Y$ is big and nef. The \emph{degree} of
such a surface is $\deg Y=K_Y^2\in\{1,\ldots,9\}$.

\begin{lem}[see e.g.\ {\cite[Proposition 8.1.23]{Dolgachev-topics}}]
\label{WDP}
Blowing up a point on a del Pezzo surface
 of degree $d\ge 2$ yields a weak del Pezzo surface of degree $d-1$.
\end{lem}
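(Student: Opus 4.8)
The plan is to let $\pi\colon \tilde Y\to Y$ be the blow-up of $Y$ at the given point $p$, with exceptional $(-1)$-curve $E$, and to verify directly the two defining properties of a weak del Pezzo surface of degree $d-1$: that $K_{\tilde Y}^2=d-1$ and that $-K_{\tilde Y}$ is big and nef. The degree is immediate from $K_{\tilde Y}=\pi^*K_Y+E$ together with $\pi^*K_Y\cdot E=0$ and $E^2=-1$, which give $K_{\tilde Y}^2=K_Y^2-1=d-1$. Since $(-K_{\tilde Y})^2=d-1\ge 1>0$, once nefness is established bigness follows automatically (a nef divisor of positive self-intersection on a surface is big, by asymptotic Riemann--Roch). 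So the whole problem reduces to proving that $-K_{\tilde Y}$ is nef.

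To test nefness I would run over the irreducible curves of $\tilde Y$. On $E$ one computes $-K_{\tilde Y}\cdot E = \pi^*(-K_Y)\cdot E - E^2 = 1>0$. Any other irreducible curve $C$ is the strict transform of an irreducible curve $\Gamma=\pi(C)\subset Y$, and writing $m=\mult_p\Gamma$ we have $C=\pi^*\Gamma-mE$. Expanding the intersection product via the projection formula then yields the clean relation $-K_{\tilde Y}\cdot C=(-K_Y\cdot\Gamma)-m$. Thus nefness of $-K_{\tilde Y}$ is \emph{equivalent} to the inequality $-K_Y\cdot\Gamma\ge\mult_p\Gamma$ for every irreducible curve $\Gamma$ on $Y$ (curves not through $p$ falling under $m=0$, where the inequality is trivial).

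This inequality is the heart of the matter and the step I expect to be the main obstacle. I would prove it by combining adjunction with the Hodge index theorem. Set $n=-K_Y\cdot\Gamma$; since $-K_Y$ is ample we have $n\ge 1$. An irreducible curve with a point of multiplicity $m$ satisfies $p_a(\Gamma)\ge\binom{m}{2}$ (the $\delta$-invariant of an $m$-fold point is at least $\binom{m}{2}$), so adjunction gives $\Gamma^2=2p_a(\Gamma)-2+n\ge m(m-1)-2+n$. On the other hand, the Hodge index theorem applied to the ample class $-K_Y$ gives $n^2=(-K_Y\cdot\Gamma)^2\ge (-K_Y)^2\,\Gamma^2=d\,\Gamma^2\ge 2\,\Gamma^2$; hence $\Gamma^2\le n^2/2$. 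This is precisely where the hypothesis $d\ge 2$ enters.

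Combining the two bounds, if one assumes for contradiction that $m\ge n+1$, then
\[
\frac{n^2}{2}\ \ge\ \Gamma^2\ \ge\ m(m-1)-2+n\ \ge\ (n+1)n-2+n\ =\ n^2+2n-2,
\]
which forces $n^2+4n-4\le 0$, i.e.\ $n<1$, contradicting $n\ge 1$. Therefore $m\le n$, the required inequality holds, $-K_{\tilde Y}$ is nef, and by the self-intersection count it is also big. Hence $\tilde Y$ is a weak del Pezzo surface of degree $d-1$.
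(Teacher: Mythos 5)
Your argument is correct. The paper itself gives no proof of this lemma --- it is quoted with a reference to Dolgachev's book (Proposition 8.1.23), where the standard treatment goes through the description of (weak) del Pezzo surfaces as blow-ups of $\PP^2$ at bubble cycles of points in (almost) general position. Your route is different and self-contained: you reduce nefness of $-K_{\tilde Y}$ to the pointwise inequality $-K_Y\cdot\Gamma\ge\mult_p\Gamma$ and establish it by playing the adjunction bound $\Gamma^2\ge m(m-1)-2+n$ (via $p_a(\Gamma)\ge\binom{m}{2}$) against the Hodge index bound $\Gamma^2\le n^2/d$. The numerics check out: assuming $m\ge n+1$ forces $n^2+4n-4\le 0$, impossible for $n\ge 1$, and the argument transparently isolates where $d\ge 2$ is used (for $d=1$ the inequality genuinely fails at the base point of $|-K_Y|$, where nodal anticanonical members have $n=1$, $m=2$). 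One cosmetic point: the step $d\,\Gamma^2\ge 2\,\Gamma^2$ presumes $\Gamma^2\ge 0$; when $\Gamma^2<0$ the desired conclusion $\Gamma^2\le n^2/2$ is trivially true, so nothing breaks, but you should say so. What your approach buys is a short intrinsic proof needing no classification data; what the reference approach buys is more, namely the full structure theory of weak del Pezzo surfaces (configuration of $(-1)$- and $(-2)$-curves), of which this lemma is a small byproduct.
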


\begin{thm}[see e.g. {\cite[Thm. 8.3.2]{Dolgachev-topics}}]
\label{theorem-weak-del-Pezzo}
Let $Y$ be a  del Pezzo surface of degree $d$. Then the following hold.
\begin{enumerate}
\item
If $d \ge 3$ then $|-K_Y|$ defines an embedding $Y\hookrightarrow \PP^d$.

\item
If $d = 2$  then $|-K_Y|$ defines a double cover $\Phi: Y\to \PP^2$
branched along a smooth curve $B\subset \PP^2$ of degree $4$.

\item
If $d = 1$ then $|-K_X|$ is a pencil with a single base point, say $O$.
The linear system $|-2K_Y|$ defines a double cover
$\Phi: Y\to Q'\subset \PP^3$, where $Q'$ is a quadric cone with vertex at
$\Phi(O)$. Furthermore $\Phi$ is  branched along a smooth curve
$B\subset Q'$ cut out on $Q'$ by a cubic surface.
\end{enumerate}
\end{thm}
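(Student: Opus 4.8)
The plan is to read everything off from Riemann--Roch, Kodaira vanishing, and a base-point-freeness criterion. Since $-K_Y$ is ample, Kodaira vanishing gives $H^i(Y,-mK_Y)=0$ for $i>0$ and $m\ge0$ (write $-mK_Y=K_Y+(-(m+1)K_Y)$ with $-(m+1)K_Y$ ample); in particular $\chi(\OOO_Y)=h^0(\OOO_Y)=1$. Riemann--Roch then yields $h^0(Y,-mK_Y)=\chi(-mK_Y)=1+\tfrac12 m(m+1)d$; in particular $\dim|-K_Y|=d$, $\dim|-2K_Y|=3d$ and $\dim|-3K_Y|=6d$. Thus $|-K_Y|$ maps $Y$ into $\PP^d$, and the positivity I need comes from Reider's theorem applied to a multiple $L$ of $-K_Y$: because $-K_Y$ is ample, every effective curve $E$ satisfies $-K_Y\cdot E\ge1$, so the numerical configurations obstructing base-point-freeness or very ampleness (which all force $L\cdot E$ small) cannot occur once $L^2$ is large enough.

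Concretely, for $d\ge3$ I take $L=-2K_Y$, so $K_Y+L=-K_Y$ and $L^2=4d\ge12$; Reider's exceptional divisors would satisfy $L\cdot E=2(-K_Y\cdot E)\le2$, and the only borderline possibility $E\sim-K_Y$ with $E^2=0$ is ruled out by $E^2=(-K_Y)^2=d\ge3$. Hence $-K_Y$ is very ample and gives a closed embedding $Y\hookrightarrow\PP^d$ whose image has degree $(-K_Y)^2=d$. For $d=2$ the same input with $L=-2K_Y$ ($L^2=8\ge5$) shows $|-K_Y|$ is base-point-free; since $\dim|-K_Y|=2$ this produces a morphism $\Phi\colon Y\to\PP^2$, and the preimage of a general line has self-intersection $(-K_Y)^2=2$, so $\Phi$ is a finite double cover. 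Writing the canonical bundle formula $K_Y=\Phi^*(K_{\PP^2}+\delta)$ with branch divisor $B\in|2\delta|$ and substituting $-K_Y=\Phi^*\OOO_{\PP^2}(1)$ gives $\delta\sim\OOO_{\PP^2}(2)$, hence $\deg B=4$; smoothness of $Y$ forces $B$ smooth.

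The case $d=1$ is where the real work lies. Here $\dim|-K_Y|=1$ and $(-K_Y)^2=1$, so $|-K_Y|$ is a pencil whose base locus is a single reduced point $O$. I then study $\Phi=\Phi_{|-2K_Y|}$, which is base-point-free by Reider with $L=-3K_Y$ ($L^2=9$, $L\cdot E\ge3$). To understand it I pass to the anticanonical ring $R=\bigoplus_{m\ge0}H^0(Y,-mK_Y)$: the dimensions $2,4,7,\dots$ force generators $x_0,x_1$ in degree $1$, a new generator $y$ in degree $2$ (as $\dim\operatorname{Sym}^2\langle x_0,x_1\rangle=3<4$), and a new generator $z$ in degree $3$, exhibiting $Y$ as a sextic in $\PP(1,1,2,3)$ with relation $z^2=F(x_0,x_1,y)$ of weighted degree $6$. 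The degree-$2$ piece is spanned by $x_0^2,x_0x_1,x_1^2,y$, and the syzygy $x_0^2\cdot x_1^2=(x_0x_1)^2$ shows that $\Phi$ maps $Y$ onto the rank-three quadric $Q'=\{u_0u_2=u_1^2\}\subset\PP^3$, a quadric cone whose vertex $[0:0:0:1]$ is the image of $O$ (where $x_0=x_1=0$). Since $(-2K_Y)^2=4=2\deg Q'$, the map $Y\to Q'$ is a double cover, the two sheets interchanged by $z\mapsto-z$; its branch curve is $B=\{F=0\}\cap Q'$, cut out by a cubic surface (as $F$ has weighted degree $6$ while $\OOO_{Q'}(1)$ pulls back to $-2K_Y$), and smooth because $Y$ is.

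I expect the $d=1$ case to be the main obstacle. Unlike $d\ge2$, the anticanonical system is too small to see the geometry directly, so one must move to $|-2K_Y|$ and control the generators and the single relation of $R$ up to degree $6$. Pinning down that the image is a \emph{cone} (rather than a smooth quadric) and that the map is exactly $2{:}1$ both hinge on this ring-theoretic analysis; the degeneracy of the quadric is precisely the trace of the base point $O$ of $|-K_Y|$, and checking that the degree-$6$ relation has the Weierstrass shape $z^2=F$ (after completing the square, using $\operatorname{char}=0$) is the delicate step.
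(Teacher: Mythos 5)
The paper offers no proof of this statement: it is quoted as a classical fact with a pointer to \cite[Thm.\ 8.3.2]{Dolgachev-topics}, so there is no in-paper argument to compare against. Your proposal is essentially the standard proof from that reference (Riemann--Roch and Kodaira vanishing for the dimension counts $h^0(-mK_Y)=1+\tfrac12 m(m+1)d$, positivity via Reider for base-point-freeness and very ampleness, the cyclic double cover formula for $d=2$, and the presentation of the anticanonical ring as a sextic $z^2=F(x_0,x_1,y)$ in $\PP(1,1,2,3)$ for $d=1$), and its overall architecture is sound.

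One step is wrong as written, though easily repaired. In the case $d\ge 3$ the residual Reider configuration is an effective divisor $E$ through the offending length-two subscheme with $L\cdot E=2$ and $E^2=0$, i.e.\ $-K_Y\cdot E=1$; you dismiss it by asserting $E\sim -K_Y$, but nothing forces $E$ to be anticanonical, so the appeal to $E^2=(-K_Y)^2=d$ is a non sequitur. The correct exclusion is adjunction: $2p_a(E)-2=E^2+K_Y\cdot E=0-1=-1$, so $p_a(E)=\tfrac12$, impossible for an effective divisor on a smooth surface. (The same parity argument also cleans up the $d=2$ base-point-freeness step, where you already note $L\cdot E$ is even.) Beyond that, your part (iii) is a sketch rather than a proof: before introducing the new generators $y$ and $z$ you must check that $x_0^2,x_0x_1,x_1^2$, and then the six degree-three products, are linearly independent (the standard device is to restrict to a member $C\in|-K_Y|$ and use $h^0(C,-K_Y|_C)=1$), and the existence of exactly one relation comes from the count $23$ weighted-degree-$6$ monomials against $h^0(-6K_Y)=22$. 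You flag these as the delicate points, correctly; carrying them out reproduces the argument of the cited source, including the identification of $\Phi(O)$ with the vertex of $Q'$ and of the branch curve with a cubic section of $Q'$.
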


The  Galois involution $\tau : Y \to Y$ associated to the double
cover $\Phi$ is a regular morphism. It is called \emph{Geiser
involution} in the case $d=2$ and  \emph{Bertini involution} in
the case $d=1$.

\begin{rem}\label{remark-(-1)-curve} Recall the following
facts (see e.g. \cite{Dolgachev-topics}).
For an irreducible curve $C$ on $Y$ we have $C^2\ge-1$
if $Y$ is a del Pezzo surface and $C^2\ge-2$ if $Y$  is a
weak del Pezzo surface. In both cases $C^2=-1$ if and only if
$C$ is a $(-1)$-curve, if and only if $-K_Y\cdot C=1$, and
$C^2=-2$ if and only if $C$ is a $(-2)$-curve, if and only if
$-K_Y\cdot C=0$. A weak del Pezzo surface is del Pezzo
if and only if it has no $(-2)$-curve.

If  $d\ge 2$
then any curve $C$ on $Y$
such that $-K_Y\cdot C=1$ is an irreducible smooth rational curve by
(i) and (ii). By the adjunction formula  such $C$
must be a $(-1)$-curve.
\end{rem}

\begin{lem}\label{claim}
Let $Y$ be a del Pezzo surface of degree $d\le 2$. Then any member
$R\in |-K_{Y}|$ is reduced and $p_{a}(R)=1$. Moreover, $R$ is
irreducible except in the  case where
\begin{itemize}
\item $d=2$, $R=R_{1}+R_{2}$, $R_{i}^2=-1$, $i=1,2$,
$R_{1}\cdot R_{2}=2$, and $R_2=\tau(R_1)$.
\end{itemize}
Furthermore, $\operatorname{Sing}(R)\subset \Phi^{-1}(B)$ and for
any $P\in \Phi^{-1}(B)$ there is a unique member $R\in |-K_Y|$
singular at $P$.
\end{lem}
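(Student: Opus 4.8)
The plan is to split the statement into its arithmetic part (reducedness, $p_a(R)=1$, and the list of reducible members) and its geometric part ($\Sing(R)$ and the uniqueness), proving the first by intersection theory and deducing the second from the double cover $\Phi$ of Theorem~\ref{theorem-weak-del-Pezzo}. Since $R\sim -K_Y$, adjunction gives $2p_a(R)-2=R\cdot(R+K_Y)=0$, so $p_a(R)=1$ for both $d=1,2$. Writing $R=\sum_i m_iR_i$ with distinct irreducible $R_i$ and intersecting with the ample class $-K_Y$, ampleness gives $-K_Y\cdot R_i\ge 1$, hence $\sum_i m_i\le\sum_i m_i(-K_Y\cdot R_i)=K_Y^2=d\le 2$. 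For $d=1$ this forces $R=R_1$, reduced and irreducible. For $d=2$ the only possibilities are: $R$ irreducible with $-K_Y\cdot R=2$; the non-reduced $R=2R_1$, excluded because then $R_1$ would be a $(-1)$-curve by Remark~\ref{remark-(-1)-curve} and $R^2=4R_1^2=-4\ne 2$; or $R=R_1+R_2$ with $-K_Y\cdot R_i=1$, so that each $R_i$ is a $(-1)$-curve by the same remark. In the last case $R^2=(-K_Y)^2=2$ together with $R_i^2=-1$ forces $R_1\cdot R_2=2$.

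\emph{Identifying the components.} For $d=2$ I would use that $\Phi_*\OOO_Y=\OOO_{\PP^2}\oplus\OOO_{\PP^2}(-2)$, so that $\Phi^*$ identifies $|-K_Y|$ with $|\OOO_{\PP^2}(1)|$ and every member is $\Phi^*\ell=\Phi^{-1}(\ell)$ for a unique line $\ell$; reducedness also follows here since $\ell\not\subset B$. If $R=R_1+R_2$, then $R_1$ is a $(-1)$-curve with $\Phi(R_1)$ a line $\ell$ and $\Phi^*\ell=R_1+\tau(R_1)\sim -K_Y$, because $\Phi\circ\tau=\Phi$. Hence both $R_2$ and $\tau(R_1)$ lie in $|-K_Y-R_1|$; a short Riemann--Roch computation (the class $-K_Y-R_1$ has self-intersection $-1$ and anticanonical degree $1$, and is effective with $h^2=0$) shows this system is a single $(-1)$-curve, so $R_2=\tau(R_1)$.

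\emph{Singular locus and uniqueness.} Since $\Phi$ is \'etale away from $\Phi^{-1}(B)$, each member is locally isomorphic to its smooth image off $\Phi^{-1}(B)$, giving $\Sing(R)\subset\Phi^{-1}(B)$; for $d=1$ one adds that $R$ is smooth at the simple base point $O$ of the pencil and $O\notin\Phi^{-1}(B)$. For the final assertion I would run the local model of the cover: with $B=\{y=0\}$ and $Y=\{z^2=y\}$ near $Q\in B$, a member $\Phi^*\ell=\{h(x,z^2)=0\}$ is singular at the ramification point over $Q$ exactly when $h_x=0$ there, i.e. exactly when $\ell$ is tangent to $B$ at $Q$. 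For $d=2$ the members form the net $\Phi^*|\OOO_{\PP^2}(1)|$, the tangent line $T_QB$ to the smooth quartic $B$ exists and is unique, and $R=\Phi^*(T_QB)$ is then the unique member singular at a prescribed $P\in\Phi^{-1}(B)$.

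I expect the uniqueness assertion in the case $d=1$ to be the main obstacle. There $|-K_Y|$ is only a pencil, realized via the bicanonical $\Phi$ as the pullback of the ruling of the quadric cone $Q'$, so the admissible curves are the ruling lines rather than a full net; I would have to analyze tangency of ruling lines to $B$ together with the behaviour of $\Phi$ over the vertex $\Phi(O)$, and verify that the singular members correspond bijectively to their singular points on $\Phi^{-1}(B)$. Matching this constrained family with the clean existence-and-uniqueness available for $d=2$ is the delicate point on which I would spend the most effort.
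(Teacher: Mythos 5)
Your argument tracks the paper's proof almost step for step: adjunction gives $p_a(R)=1$; the ampleness bound $\sum m_i(-K_Y\cdot R_i)=d\le 2$ forces reducedness and pushes the reducible case to $d=2$ with two components of anticanonical degree $1$, hence $(-1)$-curves by Remark~\ref{remark-(-1)-curve}; $R\ne 2R_1$ is excluded via $R^2=2$; and $R_1\cdot R_2=2$ comes from expanding $R^2$. The identification $R_2=\tau(R_1)$ and the local model $z^2=y$ giving the tangency criterion for $\Sing(R)$ are also exactly the paper's route (the paper gets $R_2=\tau(R_1)$ by noting $R=\Phi^{-1}(L)$ is $\tau$-invariant, and Remark~\ref{reamrk-Geiser} records the quicker numerical argument $\tau(R_1)\cdot R_2=(-K_Y-R_1)\cdot R_2=-1$, which you could substitute for your Riemann--Roch computation).

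On the $d=1$ uniqueness that you flag as the main obstacle: you are right to be suspicious, and you should not try to close that gap, because the existence half of the final clause fails for $d=1$ as literally stated. The members of $|-K_Y|$ are the pullbacks $\Phi^*\ell$ of the ruling lines $\ell$ of $Q'$; through a point $P\in\Phi^{-1}(B)$ with $P\ne O$ there passes exactly one member, and by your own local computation it is singular at $P$ only when $\ell$ is tangent to $B$ at $\Phi(P)$ --- a closed condition that fails for general $P$ (the double cover of $\ell\simeq\PP^1$ branched at the three points of $\ell\cap B$ and at the vertex is a smooth elliptic curve when these four points are distinct). So only ``at most one'' survives when $d=1$. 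The paper's own proof does not do better: it asserts ``in both cases $R=\Phi^{-1}(L)$ with $L$ a line in $\PP^2$,'' which only makes literal sense for $d=2$. Since the ``Furthermore'' clause is never invoked elsewhere in the paper (every later application uses only reducedness, the classification of reducible members, and, for $d=2$, the description via lines in $\PP^2$), this discrepancy is harmless; your proof of everything that is actually needed is complete and matches the paper's.
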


\begin{proof} We have $p_{a}(R)=1$ by
adjunction. Let $R_1\varsubsetneq R$ be a reduced irreducible
component. Then $(-K_{Y})\cdot R_1<(-K_{Y})\cdot R=d$ and so $d=2$
and $R_1$ is a $(-1)$-curve by Remark \ref{remark-(-1)-curve}.
Since $R^2=d=2$, $R\neq 2R_{1}$. Therefore $R=R_1+R_2$, where
the $R_i$ ($i=1,2$) are $(-1)$-curves and $R_{1}\cdot
R_{2}=\frac12 (R^2-R_1^2-R_2^2)=2$. Finally, in both cases we have
$R=\Phi^{-1}(L)$, where $L$ is a line  in $ \PP^2$. Thus $R$
is singular at $P$ if and only if $\Phi(P)\in B$ and $L$ is
tangent to $B$ at $\Phi(P)$.
\end{proof}

\begin{rem}\label{reamrk-Geiser}
Let $R_1$ and $R_2$ be $(-1)$-curves on  a del Pezzo surface $Y$
of degree $2$
such that $R_1\cdot R_2\ge 2$. Then $R_2=\tau(R_1)$, $R_1\cdot R_2=2$,
and  $R_1+R_2\in | -K_Y|$.
Indeed, $R_1+\tau(R_1)\sim -K_Y$. Hence
$\tau(R_1)\cdot R_2=-1$ and so $\tau(R_1)= R_2$.
\end{rem}

\section{$(-K)$-polar cylinders on del Pezzo surfaces}\label{-section-cylinders}
We adjust here some lemmas in \cite[\S 4]{Kishimoto-Prokhorov-Zaidenberg}
to our setting.
\begin{nota}\label{notation-0026}
Let $Y$ be a del Pezzo surface of degree $d$. Suppose that
$Y$ admits a $(-K_Y)$-polar cylinder
\begin{equation}\label{equation-001}
U=Y \setminus \supp(D) \cong Z \times \A^1, \quad \text{where}\quad\
D=\sum_{i=1}^n \delta_i \Delta_i \qq -K_Y \quad (\delta_i>0)
\end{equation}
and $Z$ is a smooth rational affine curve.
We let $\cL$ be the linear pencil on $Y$ defined
by the rational map $\Psi: Y\dashrightarrow \PP^1$ which extends the
projection $\pr_1:U\cong Z\times\A^1\to Z$.

Resolving, if necessary, the base locus of the pencil $\cL$
we obtain a  diagram
\begin{equation}\label{equation-001dia}
\xymatrix{
&W\ar[dl]_p\ar[dr]^q&
\\
Y\ar@{-->}[rr]^{\Psi}&&\PP^1
}
\end{equation}
where we let $p:W \to Y$ be the shortest succession
of blowups such
that the proper transform $\cL_W:=p_*^{-1} \cL$ is base point free.
Let $S$ be the last exceptional
curve of the modification $p$ unless $p$ is the identity map,
i.e., $\Bs \cL =\emptyset$.
Notice that $S$ is a unique $(-1)$-curve in the exceptional
locus $p^{-1}(P)$ and a section of $q$. The restriction
${\Phi_{\cL_W}}|_U$ is an $\A^1$-fibration and its fibers are reduced,
irreducible affine curves with one place at infinity, situated on
$S$.
\end{nota}

\begin{lem}\label{lem2}
One of the following holds.
\begin{enumerate}
 \item
$\Bs \, \cL$ consists of a single point, say $P$;
 \item
$\Bs \, \cL=\emptyset$ and $5\le d\le 8$.
\end{enumerate}
\end{lem}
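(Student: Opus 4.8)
The plan is to read off both alternatives from the geometry of a general member of $\cL$ together with the ``one place at infinity'' property recorded in Notation \ref{notation-0026}. First I would fix a general member $F\in\cL$: it is the closure in $Y$ of a fibre $\{z\}\times\A^1$ of the projection $U\cong Z\times\A^1\to Z$, hence an irreducible rational curve. Two general members $F_1,F_2$ correspond to distinct fibres, so they are disjoint on $U$ and can meet only along $Y\setminus U=\supp(D)$; since any point lying on two (distinct) general members of a pencil lies on every member, $\Bs\,\cL$ is exactly the set of such common points. The crucial input is that each $F_i$, being the closure of an $\A^1$ with a single place at infinity, meets $\supp(D)$ in one point only. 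Consequently $F_1$ and $F_2$ have at most one common point, and if they meet at all then all members of $\cL$ pass through that one point.

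This already yields the dichotomy. Either two general members are disjoint, in which case $F^2=0$, the pencil is base point free, $p=\operatorname{id}$ and $\Psi\colon Y\to\PP^1$ is a morphism (case (ii)); or they meet in a single point $P$, which is then the unique base point, giving case (i). Because a member has only one place at infinity it cannot pass through two distinct boundary points, so a second base point is excluded; this is exactly what rules out $|\Bs\,\cL|\ge 2$.

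It remains to bound $d$ in the base point free case. Here $F$ is a fibre of $\Psi$, so $F^2=0$, and $F$ being irreducible and rational (whence $p_a(F)=0$) adjunction gives $-K_Y\cdot F=2$: thus $\Psi$ is a conic bundle. Since $\PP^2$ admits no morphism onto $\PP^1$, the case $d=9$ is impossible and $d\le 8$. For the lower bound I would analyse the boundary. Over $Z$ every fibre of $\Psi$ is irreducible: a reducible fibre $A+B$ over a point of $Z$ (with $A$ meeting $U$) would force its boundary component $B$ to have negative coefficient in $D\qq -K_Y$, which is absurd. Hence the part of $\supp(D)$ over $Z$ is a single section $\Sigma$, the closure of $Z\times\{\infty\}$, while over the finite set $\PP^1\setminus Z$ the whole fibres --- in particular all $8-d$ degenerate fibres --- lie in $\supp(D)$. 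Writing $D=2\Sigma+\Phi$ with $\Phi$ vertical (the coefficient of $\Sigma$ equals $D\cdot F=2$, since $\Sigma\cdot F=1$), I would apply adjunction to the rational curve $\Sigma$ together with $\Sigma^2\ge-1$ (valid as $\Sigma$ is an irreducible curve on a del Pezzo surface) to get $\Phi\cdot\Sigma=2-\Sigma^2\le 3$. A short computation on each degenerate fibre $A_j+B_j$, using $-K_Y\cdot A_j=-K_Y\cdot B_j=1$, shows that the component met by $\Sigma$ enters $\Phi$ with coefficient exceeding $1$; hence $\Phi\cdot\Sigma$ exceeds the number of degenerate fibres, and $\Phi\cdot\Sigma\le 3$ bounds that number, forcing $d\ge 5$.

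The main obstacle is precisely this last step: converting the global relation $D\qq -K_Y$ into control of the coefficients of the vertical boundary components and tying them to $\Sigma^2$. Everything else is formal, but pinning down the coefficients on the degenerate fibres --- and verifying that $\Sigma$ necessarily meets the component of larger coefficient --- is where the real work lies.
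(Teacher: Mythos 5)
Your dichotomy argument (two general members of $\cL$ can meet only along $\supp(D)$, and each meets $\supp(D)$ in a single point by the one-place-at-infinity property) is exactly the paper's, and your conic-bundle setup in case (ii) --- a unique horizontal component $\Sigma$ of $D$, necessarily a section, with coefficient $\delta_0=D\cdot F=2$ --- also matches. The error is in the boundary analysis. Your claim that a reducible fibre $A+B$ over a point of $Z$ (with $A$ meeting $U$) would force the component $B$ to carry a negative coefficient in $D$ is false. In that situation $B\subset\supp(D)$ with some coefficient $\beta>0$, the section $\Sigma$ meets $B$ rather than $A$ (since $A\cap\supp(D)$ is the single point $A\cap B$ while $\Sigma\cdot(A+B)=1$), and the two relations $1=-K_Y\cdot A=D\cdot A=\beta$ and $1=-K_Y\cdot B=D\cdot B=2-\beta$ are simultaneously satisfied by $\beta=1$; nothing forbids this, since $\lfloor D\rfloor=0$ is only proved later and only for $d\le 3$. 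A concrete instance: on the degree $7$ surface obtained by blowing up $\PP^2$ at $p_1,p_2$, take the pencil of lines through $p_1$, $\Sigma=E_1$, and $D=2E_1+2F+\ell'$ where $\ell'$ is the strict transform of the line through $p_1,p_2$; then $Y\setminus\supp(D)\cong\A^2$ is a $(-K_Y)$-polar cylinder and the degenerate fibre $\ell'+E_2$ lies over $Z$ with $\ell'$ of coefficient exactly $1$.

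Because of this, your strict inequalities (``coefficient exceeding $1$,'' hence ``$\Phi\cdot\Sigma$ exceeds the number of degenerate fibres'') are not available; taken literally they would give $d\ge 6$, overshooting the statement. Fortunately you do not need them: whichever component $C_i$ of a degenerate fibre meets $\Sigma$ must lie in $\supp(D)$ (otherwise $1=-K_Y\cdot C_i=D\cdot C_i\ge\delta_0\Sigma\cdot C_i=2$), and then $1=D\cdot C_i\ge 2\Sigma\cdot C_i+\delta_iC_i^2=2-\delta_i$ forces $\delta_i\ge 1$, irrespective of whether the fibre sits over $Z$ or over $\PP^1\setminus Z$. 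With the non-strict count $8-d\le\Phi\cdot\Sigma\le 2-\Sigma^2\le 3$ your adjunction route does close and yields $d\ge 5$. The paper instead totals anticanonical degrees directly, $d=-K_Y\cdot D\ge\delta_0+\sum_{i=1}^{8-d}\delta_i\ge 2+(8-d)$, which avoids invoking $\Sigma^2\ge -1$ altogether; the two counts are equivalent in substance once your per-fibre estimate is corrected to $\delta_i\ge 1$.
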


\begin{proof}
Since the general
members of $\cL$ are disjoint in $U$ and each one meets the
cylinder $U$ along an $\A^1$-curve, $\Bs \cL $ consists of at most
one point, which we denote by $P$.
Suppose that $\Bs \cL =\emptyset$. Then
the pencil $\cL$ yields a conic bundle $\Psi:Y \to \PP^1$
with a section, which is a component of $D$, say
$\Delta_0$.
In particular $d\le 8$.
For a general fiber $L$ of $\Psi$ we have
$$
L^2=0,\qquad -K_Y\cdot L=2=D\cdot L=\delta_0.
$$
Note that $\Psi$ has exactly $8-d$ degenerate fibers $L_1,\dots, L_{8-d}$.
Each of these fibers  is reduced and consists of two $(-1)$-curves
meeting transversally at a point.
Let $C_i$ be the component of $L_i$ that
meets $\Delta_0$.
We claim that each $C_i$ is a component of $D$.
Indeed, otherwise
\[1=
-K_Y\cdot C_i=D\cdot C_i\ge \delta_0\Delta_0\cdot C_i=\delta_0=2\,,
\]
a contradiction.
Therefore we may assume that $C_i=\Delta_i$ and so
\[
1=D\cdot C_i\ge \delta_0\Delta_0\cdot C_i+ \delta_i C_i^2= 2-\delta_i\,.
\] Hence $\delta_i\ge 1$, $i=1,\ldots,8-d$.
We obtain
\[
d=-K_Y\cdot D\ge \sum \delta_i\ge \delta_0+ \sum_{i=1}^{8-d} \delta_i\ge
2+ 8-d=10-d\,.
\]
Thus $d\ge 5$ as stated.
\end{proof}

\begin{rem}
If $\Bs \cL=\{P\}$ ($\Bs \cL=\emptyset$, respectively) then all
components of $D$ (all components of $D$ except for $\Delta_0$,
respectively) are contained in the fibers of $\Psi$. Indeed,
otherwise not all the fibers of $\Psi|U$ were $\A^1$-curves,
contrary to the definition of a cylinder.
\end{rem}

\begin{lem}\label{picard}
The number of irreducible components of the
reduced curve $\supp (D)$, say $n$, is greater than or equal to $10-d$.

\end{lem}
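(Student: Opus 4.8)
The plan is to deduce the inequality purely from a Picard-number count: the components $\Delta_i$ must generate $\Pic(Y)$ because cutting them out leaves a cylinder, whose Picard group vanishes. I would first recall that for a del Pezzo surface $Y$ of degree $d$ the group $\Pic(Y)$ is free of rank $\rho(Y)=10-d$.

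The crucial step is the computation $\Pic(U)=0$. Since $U\cong Z\times\A^1$, pullback along the first projection induces an isomorphism $\Pic(Z)\xrightarrow{\sim}\Pic(U)$. As $Z$ is a smooth rational affine curve, it is $\PP^1$ with a nonempty finite set of points removed; a single point already generates $\Pic(\PP^1)\cong\Z$, so the excision sequence on $\PP^1$ gives $\Pic(Z)=0$, and hence $\Pic(U)=0$. This triviality is the only non-formal input and is the point to be handled with care.

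Finally I would apply the excision (localization) exact sequence for the open immersion $U=Y\setminus\supp(D)\hookrightarrow Y$, whose boundary is the union of the codimension-one components $\Delta_1,\dots,\Delta_n$:
\[
\bigoplus_{i=1}^n \Z\,[\Delta_i]\longrightarrow \Pic(Y)\longrightarrow \Pic(U)\longrightarrow 0.
\]
Since $\Pic(U)=0$, the classes $[\Delta_1],\dots,[\Delta_n]$ generate the free abelian group $\Pic(Y)\cong\Z^{10-d}$; a generating set of such a group has at least $10-d$ elements, so $n\ge 10-d$, as required. No appeal to the pencil $\cL$ or to Lemma \ref{lem2} is needed for this bound, although one could alternatively reach the same conclusion by counting the fibre components of $\Psi$ in the two cases of that lemma.
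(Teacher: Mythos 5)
Your proposal is correct and follows essentially the same argument as the paper: the localization exact sequence for $U=Y\setminus\supp(D)$ together with $\Pic(U)=\Pic(Z\times\A^1)=0$ forces the classes $[\Delta_i]$ to generate $\Pic(Y)\cong\Z^{10-d}$, whence $n\ge 10-d$. The extra detail you supply (justifying $\Pic(Z)=0$ and the homotopy invariance step) is a fine elaboration of what the paper states without proof.
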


\begin{proof}
Consider the exact sequence
\[
\bigoplus_{i=1}^n \Z [\Delta_i] \longrightarrow \Pic (Y)
\longrightarrow \Pic (U) \longrightarrow 0.
\]
Since $\Pic(Z)=0$ and $U \cong Z \times \A^1$ we have
$\Pic(U)=0$. Hence $n\ge \rho(Y)=10-d$, as stated.
\end{proof}

\begin{lem}\label{lemma-degenerate-fibers}
Assume that $\Bs \cL=\{P\}$.
Let $L$ be a member of  $\cL$ and $C$ be
an irreducible component of $L$.
Then the following hold.
\begin{enumerate}
\item
$\supp(L)$ is simply connected and $\supp(L)\setminus\{P\}$ is an $SNC$
divisor;
\item
$C$ is rational and smooth outside $P$;
\item
if $P\in C$ then $C \setminus \{P \}\simeq \A^1$.
\end{enumerate}
\end{lem}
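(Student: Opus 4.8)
The plan is to transfer everything to the smooth rational surface $W$ and the morphism $q\colon W\to\PP^1$, whose fibres admit a simple description, and then to push that description down along $p$. A general member of $\cL$ is the closure of a single fibre $\{z\}\times\A^1$ of $\pr_1$ (here I use that $U\cong Z\times\A^1$ is a genuine product, so the fibres of $\pr_1$ are irreducible), hence is irreducible; its proper transform is a general fibre of $q$, and this is smooth rational by Bertini since $\cL_W$ is base point free. Thus $q$ is a $\PP^1$-fibration, and every fibre $q^{-1}(t)$ has arithmetic genus $0$ and is connected, so its support is a \emph{tree of smooth rational curves}, i.e.\ a simply connected $SNC$ divisor. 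I also note that $P\in\supp(D)$: were $P\in U$, all members of $\cL$ would pass through the single point $P\in U$, contradicting that on $U\cong Z\times\A^1$ they restrict to the pairwise disjoint fibres of $\pr_1$. In particular $p$ is an isomorphism over $Y\setminus\{P\}$.

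Now fix $L\in\cL$, say $L=p_*q^{-1}(t)$, and set $\Theta:=\supp q^{-1}(t)$, so that $\supp(L)=p(\Theta)$. For (ii), each component $C$ of $L$ equals $p(C')$, where $C'\subset\Theta$ is its proper transform, a smooth rational curve; hence $C$ is rational, and since $p$ is an isomorphism over $Y\setminus\{P\}$ the curve $C$ is smooth off $P$. For the $SNC$ assertion in (i), the same isomorphism gives $\supp(L)\setminus\{P\}\cong\Theta\setminus p^{-1}(P)$, an open subcurve of the $SNC$ tree $\Theta$, which is therefore $SNC$.

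For (iii) I distinguish two cases. If $C\not\subseteq\supp(D)$ then $C$ meets $U$; as $U\cong Z\times\A^1$ is a product and $C$ is irreducible, $C\cap U$ is a single fibre $\{z\}\times\A^1\cong\A^1$ with one place at infinity, so $C$ is the closure of an $\A^1$ and $C\cap\supp(D)$ consists of one point. Since $P\in C\cap\supp(D)$ this forces $C\cap\supp(D)=\{P\}$, whence $C\setminus\{P\}=C\cap U\cong\A^1$. If instead $C\subseteq\supp(D)$, then $C$ is a complete rational curve, smooth off $P$, and its proper transform $C'\cong\PP^1$ is the normalization of $C$; thus $C\setminus\{P\}\cong\PP^1\setminus\bigl(C'\cap p^{-1}(P)\bigr)$, which is $\A^1$ precisely when $C'$ meets $p^{-1}(P)$ in a single point, i.e.\ when $C$ is unibranch at $P$.

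It remains to prove the simple connectivity in (i) together with the unibranch property just used in (iii), and this is where I expect the real work to lie. Writing $E:=\Theta\cap p^{-1}(P)$, the curve $\supp(L)=p(\Theta)$ is obtained from the simply connected tree $\Theta$ by collapsing the subforest $E$ to the point $P$, so $\pi_1(\supp L)$ is free of rank $\bigl(\#\{\text{connected components of }E\}\bigr)-1$; hence $\supp(L)$ is simply connected exactly when $E$ is connected, and likewise each $C$ is unibranch at $P$ exactly when its proper transform crosses $p^{-1}(P)$ once. The main obstacle is to establish these two facts about the geometry of the fibres over the single base point $P$. Here one must exploit the hypotheses recorded in Notation \ref{notation-0026}: that $\Bs\cL=\{P\}$ is a single point, and that the last exceptional curve $S$ is the \emph{unique} $(-1)$-curve in $p^{-1}(P)$ and a section of $q$. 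These constraints pin down the shape of the minimal embedded resolution of the base point, so that in each fibre the curves lying over $P$ form a connected chain abutting the section $S$, and every proper transform crosses this chain transversally at one point. Carrying out this local analysis of the resolution is, I expect, the crux of the argument.
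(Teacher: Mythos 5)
Your strategy is exactly the paper's: the published proof is a one-line assertion that everything follows from $q$ being a rational curve fibration and from $\Exc(p)=p^{-1}(P)$, and your write-up simply executes that plan in detail. Parts (ii), the $SNC$ statement in (i), and your reduction of (i) and (iii) to two combinatorial claims about the fibre over $P$ are all correct. But you stop at precisely the point you yourself identify as the crux --- connectedness of $\Theta\cap p^{-1}(P)$ and the unibranch property --- and only gesture at ``a local analysis of the resolution.'' As written, that is a genuine gap: without it neither the simple connectivity in (i) nor (iii) for components contained in $\supp(D)$ is established.

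The gap closes quickly from facts already recorded in Notation \ref{notation-0026}, with no case analysis of the blowup sequence needed. First, every exceptional component of $p$ other than $S$ is \emph{vertical} for $q$: a general member $L_W$ of $\cL_W$ satisfies $L_W\cap p^{-1}(U)\cong\A^1$, so $L_W$ meets $W\setminus p^{-1}(U)\supseteq p^{-1}(P)$ in a single point, which lies on $S$; hence $L_W$ is disjoint from every other exceptional component, and these have intersection number $0$ with the fibre class. Now fix the fibre $\Theta=\supp q^{-1}(t)$ and let $E_t$ be the union of the exceptional components contained in it. Since $p^{-1}(P)$ is connected, since exceptional components sitting in distinct fibres are disjoint, and since $S$ is the only horizontal exceptional component, every connected component of $E_t$ must meet $S$, necessarily at the single point $s_t=S\cap q^{-1}(t)$. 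Therefore $\Theta\cap p^{-1}(P)=E_t\cup\{s_t\}$ is connected, which by your van Kampen computation gives $\pi_1(\supp L)=1$. For the unibranch claim, if a component $C'$ of $\Theta$ not contained in $p^{-1}(P)$ met the connected set $E_t\cup\{s_t\}$ in two distinct points, the arc in $C'$ joining them together with a path inside $E_t\cup\{s_t\}$ would produce a loop in $\Theta$, contradicting the fact that the fibre of a $\PP^1$-fibration on a smooth surface is a tree. Hence $C'\cap p^{-1}(P)$ is one point and $C\setminus\{P\}\cong\PP^1\setminus\{\mathrm{pt}\}\cong\A^1$, completing (iii).
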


\begin{proof}
All the assertions follow from the fact that $q$ in \eqref{equation-001dia}
is a rational curve fibration
and the exceptional locus of $p$ coincides with $p^{-1}(P)$.
\end{proof}

In the next lemma we study the singularities of the pair
$(Y,D)$. We refer to \cite{Kollar-1995-pairs} or to
\cite[Chapter 2]{Kollar-Mori-19988}
for the  standard terminology on singularities of pairs.

\begin{lem}[Key Lemma]\label{lem4}
Assume that $\Bs \cL=\{P\}$. Then the pair $(Y, D)$ is not log canonical
at $P$. More precisely, in notation as in \textup{\ref{notation-0026}} the
discrepancy $a(S; D)$ of $S$ with respect to $K_Y+D$ is equal to $-2$.
\end{lem}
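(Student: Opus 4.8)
The plan is to read the discrepancy $a(S;D)$ directly off the log-pullback formula on the resolution $p\colon W\to Y$ of \ref{notation-0026}, by intersecting with a general fiber of $q$. Write $F$ for a general member of the base-point-free pencil $\cL_W$, i.e.\ a general fiber of $q$, and $S=E_k$ for the last exceptional curve of $p$; all $p$-exceptional prime divisors $E_i$ are contracted to the single base point $P=\Bs\cL$. I would start from the standard identity
\[
K_W+p_*^{-1}D=p^*(K_Y+D)+\sum_{i} a(E_i;D)\,E_i\,,
\]
the sum running over the exceptional $E_i$. The crucial observation is that $D\qq -K_Y$ forces $K_Y+D\qq 0$, so $p^*(K_Y+D)\qq 0$ has zero intersection with every curve on $W$. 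Intersecting the displayed identity with $F$ should therefore isolate $a(S;D)$ on the right-hand side, once I control how $F$ meets the exceptional locus.

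Next I would assemble the three intersection numbers with $F$. Being a general fiber of the rational curve fibration $q$, $F$ is a smooth irreducible rational curve with $F^2=0$, whence $K_W\cdot F=-2$ by adjunction. By the remark following Lemma~\ref{lem2} every component of $D$ lies in a fiber of $\Psi$, so the components of $p_*^{-1}D$ are vertical for $q$ and $(p_*^{-1}D)\cdot F=0$. Finally $p^*(K_Y+D)\cdot F=(K_Y+D)\cdot L=0$ by the projection formula, where $L=p_*F$ is the associated general member of $\cL$. Granting that $S$ is the only exceptional curve meeting $F$, with $S\cdot F=1$, the identity collapses to
\[
-2=(K_W+p_*^{-1}D)\cdot F=a(S;D)\,(S\cdot F)=a(S;D)\,,
\]
which is the asserted value. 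Since $-2<-1$ and the center of $S$ on $Y$ is $P$, this shows at once that $(Y,D)$ fails to be log canonical at $P$.

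The main obstacle is precisely the geometric input just used, namely that $F$ meets $p^{-1}(P)$ in a single point lying on $S$, so that $S\cdot F=1$ while $E_i\cdot F=0$ for every other exceptional curve $E_i$. Here I would invoke the ``one place at infinity'' property recorded in \ref{notation-0026}: the $\A^1$-fiber $L\cap U$ has a unique place at infinity, situated on $S$, so $L$ is unibranch at $P$. Consequently, throughout the successive blowups constituting $p$ the proper transform of a general member follows this single branch, meeting each newly created exceptional curve in one point and, in the end, meeting the exceptional locus only in $F\cap S$, transversally since $\cL_W$ is base-point-free; this is consistent with $S$ being a section of $q$. I note that the computation uses only $\Bs\cL=\{P\}$ and $D\qq -K_Y$, not the value of $d$; the hypothesis $d\le 2$ enters merely through Lemma~\ref{lem2}, which ensures that for such surfaces one is always in the case $\Bs\cL=\{P\}$.
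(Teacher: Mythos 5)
Your proposal is correct and follows essentially the same route as the paper: the authors also intersect the log-pullback identity with a general fiber $l$ of $q$, using that $K_Y+D\qq 0$, that $l$ does not meet $\supp(D_W+p^*(P)-S)$, and that $S$ is a section of $q$, to read off $a(S;D)=(K_W+D_W)\cdot l=-2$. Your extra justification of $S\cdot F=1$ via the one-place-at-infinity property is exactly the geometric input the paper records in \ref{notation-0026}.
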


\begin{proof}
We write
\begin{equation}\label{discr}
K_W +D_W \qq p^*(K_Y+D) +
a(S; D)S + \sum a(E; D)E,
\end{equation}
where the summation on the
right hand side ranges over the components of the
exceptional divisor of $p$ except for
$S$, and $D_W$ is the proper transform of $D$ on $W$. Letting $l$
be a general fiber of $q$, by \eqref{discr} we obtain
$$
-2=(K_W+D_W)\cdot l=a(S; D).
$$
Indeed,  $K_Y+D \qq 0$ and $l$ does not meet the curve
$\supp(D_W+p^{*}(P)-S)$.
This proves the assertion.
\end{proof}

\begin{cor}\label{cor1}
If $\Bs \cL=\{P\}$ then $\mult_P (D) >1$.
\end{cor}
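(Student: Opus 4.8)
The plan is to deduce $\mult_P(D)>1$ from Lemma~\ref{lem4} by contraposition. By the Key Lemma the pair $(Y,D)$ fails to be log canonical at $P$ (indeed $a(S;D)=-2<-1$), so it suffices to establish the following general fact: on a smooth surface, if $\mult_P(D)\le 1$ then $(Y,D)$ is log canonical at $P$. Applying this to our effective divisor $D\qq -K_Y$ and contraposing then yields $\mult_P(D)>1$.

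To prove the general fact I would argue by induction on the length of the chain of blowups extracting an arbitrary divisor $E$ over $P$, showing $a(E;D)\ge -1$. Let $\sigma\colon Y_1\to Y$ be the blowup of $P$ with exceptional curve $E_1$, and let $\widetilde D$ be the proper transform of $D$. From $K_{Y_1}+\widetilde D+(\mult_P(D)-1)E_1=\sigma^*(K_Y+D)$ one reads off $a(E_1;D)=1-\mult_P(D)\ge 0$, which settles the base case and shows that testing log canonicity of $(Y,D)$ at $P$ amounts to testing it for the transformed pair $(Y_1,D_1)$, $D_1=\widetilde D+(\mult_P(D)-1)E_1$, at every point $Q\in E_1$. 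The key numerical input is that the proper transform meets the exceptional curve with total multiplicity $\widetilde D\cdot E_1=\mult_P(D)$, whence $\mult_Q(\widetilde D)\le \widetilde D\cdot E_1=\mult_P(D)$ for each $Q\in E_1$. Combined with the non-positive coefficient $\mult_P(D)-1\le 0$ carried by $E_1$, this gives
\[
\mult_Q(D_1)\le \mult_P(D)+\bigl(\mult_P(D)-1\bigr)=2\mult_P(D)-1\le 1,
\]
so the inductive hypothesis applies to $(Y_1,D_1)$ at $Q$; since $K_{Y_1}+D_1=\sigma^*(K_Y+D)$ the discrepancies of divisors over $Q$ are unchanged, and the induction closes.

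Equivalently, and more concisely, one may simply invoke the standard lower bound $\operatorname{lct}_P(Y,D)\ge 1/\mult_P(D)$ for an effective $\Q$-divisor on a smooth surface (see \cite{Kollar-1995-pairs, Kollar-Mori-19988}): when $\mult_P(D)\le 1$ this forces $\operatorname{lct}_P(Y,D)\ge 1$, i.e.\ log canonicity at $P$. I expect the only mild obstacle to be the bookkeeping in the inductive step, where the transformed boundary $D_1$ is no longer effective because its component $E_1$ acquires the coefficient $\mult_P(D)-1$; this is harmless, since that coefficient is $\le 0$ and the multiplicity estimate above keeps every local multiplicity at most $1$ throughout the resolution. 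All the geometric content is concentrated in the single inequality $\mult_Q(\widetilde D)\le \widetilde D\cdot E_1=\mult_P(D)$, so once this point is dispatched the corollary is immediate.
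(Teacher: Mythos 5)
Your argument is correct and is essentially the paper's: the paper likewise deduces the corollary by contraposition from Lemma~\ref{lem4}, citing \cite[Ex.~3.14.1]{Kollar-1995-pairs} for the fact that $\mult_P(D)\le 1$ would make $(Y,D)$ (log) canonical at $P$. You simply prove that cited multiplicity criterion inline instead of quoting it, and your handling of the non-effective coefficient on $E_1$ in the inductive step is indeed harmless since a non-positive coefficient only increases discrepancies.
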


\begin{proof}
Indeed, otherwise the pair $(Y,D)$ would be canonical by
\cite[Ex.\ 3.14.1]{Kollar-1995-pairs}, and in particular,
log canonical at $P$,
which contradicts Lemma \ref{lem4}.
\end{proof}

\begin{cor}\label{cor00121}
If $\Bs \cL=\{P\}$ then
every $(-1)$-curve $C$ on $Y$ passing
through $P$ is contained in $\supp(D)$.
\end{cor}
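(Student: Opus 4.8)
The plan is to argue by contradiction, with the key input being the multiplicity bound supplied by Corollary \ref{cor1}. Assume that $C$ is a $(-1)$-curve on $Y$ passing through $P$ but that $C\not\subset\supp(D)$. The first step is to pin down the global intersection number $D\cdot C$. Since $D\qq -K_Y$ and $C$ is a $(-1)$-curve, Remark \ref{remark-(-1)-curve} gives $-K_Y\cdot C=1$, and $\Q$-linear equivalence yields
\[
D\cdot C=-K_Y\cdot C=1.
\]

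The second step is to estimate $D\cdot C$ from below by localizing at $P$. Because $C$ is not a component of $D$, the cycle $D|_C$ is effective, so $D\cdot C$ equals the sum of the local intersection multiplicities of $D$ and $C$ over the points of $C\cap\supp(D)$; in particular it is bounded below by the local intersection number at the single point $P$. Here I would invoke the standard fact that this local number is at least $\mult_P(D)\cdot\mult_P(C)$. Since a $(-1)$-curve is an irreducible smooth rational curve (again Remark \ref{remark-(-1)-curve}), we have $\mult_P(C)=1$, and therefore $D\cdot C\ge\mult_P(D)$.

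Combining the two steps gives $1=D\cdot C\ge\mult_P(D)$, which contradicts the strict inequality $\mult_P(D)>1$ of Corollary \ref{cor1}. Hence the assumption $C\not\subset\supp(D)$ is untenable and $C\subset\supp(D)$, as asserted. There is no serious obstacle here: the argument is a one-line intersection computation once the anticanonical normalization $D\qq -K_Y$ and the smoothness of $C$ at $P$ are in place. The only point requiring care is the inequality $D\cdot C\ge\mult_P(D)\cdot\mult_P(C)$ for a curve $C$ not contained in $\supp(D)$, which is precisely the place where the hypothesis $C\not\subset\supp(D)$ (so that the local intersection is defined and effective) is used; everything else is bookkeeping with the numerical data of $(-1)$-curves recorded in Remark \ref{remark-(-1)-curve}.
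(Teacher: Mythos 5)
Your argument is correct and is essentially identical to the paper's proof: both assume $C\not\subset\supp(D)$, compute $D\cdot C=-K_Y\cdot C=1$, bound this below by $\mult_P(D)$ using smoothness of $C$ at $P$, and contradict Corollary \ref{cor1}. You simply spell out the local intersection inequality in more detail than the paper does.
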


\begin{proof}
Assume to the contrary that $C$ is not a component of $D$.
Then
$$
\mult_P D\le C\cdot D=-K_Y\cdot C=1,
$$
which
contradicts Corollary \ref{cor1}.
\end{proof}

\begin{conv}\label{notation-D}
From now on we assume that $d\le 3$.
By Lemma \ref {lem2} we have $\Bs \cL=\{P\}$.
\end{conv}

\begin{lem}\label{lem0} We have $\lfloor D\rfloor =0$ i.e. $\delta_i< 1$
for all $i=1,\ldots,n.$ \end{lem}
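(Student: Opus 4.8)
The plan is to argue by contradiction: suppose that $\delta_1\ge 1$ for some component $\Delta_1$ of $D$, so that $\Delta_1$ occurs in $\lfloor D\rfloor$. First I would record the numerical constraints forced by $D\qq -K_Y$. Intersecting with $-K_Y$ and using that $-K_Y$ is ample (so $-K_Y\cdot\Delta_i\ge 1$ for every irreducible component) gives
\[
d=-K_Y\cdot D=\sum_i\delta_i\,(-K_Y\cdot\Delta_i)\ge\sum_i\delta_i,
\]
whence $\sum_i\delta_i\le d\le 3$; since $\delta_1\ge 1$ this also yields $-K_Y\cdot\Delta_1\le d\le 3$ and $\sum_{j\ne 1}\delta_j\le d-\delta_1$. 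By Convention \ref{notation-D} and Lemma \ref{lem2} we have $\Bs\cL=\{P\}$, and by the remark following Lemma \ref{lem2} every $\Delta_i$ is contained in a fibre of $\Psi$; in particular $\Delta_1$ is vertical. I would then split into two cases according to whether $P\in\Delta_1$.

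In the case $P\notin\Delta_1$ I claim that $\Delta_1$ is a $(-1)$-curve. Indeed, by Lemma \ref{lemma-degenerate-fibers}(ii) the curve $\Delta_1$ is smooth and rational (it is smooth away from $P$ and avoids $P$), so $\Delta_1^2=-K_Y\cdot\Delta_1-2\ge -1$. On $W$ its proper transform is disjoint from $p^{-1}(P)$ and is a component of a fibre $l_1$ of the rational curve fibration $q$; since every fibre of $q$ meets the section $S$ while $\Delta_1$ does not, the fibre $l_1$ is reducible, and therefore each of its components, in particular $\Delta_1$, has strictly negative self-intersection. Hence $\Delta_1^2\le -1$, forcing $\Delta_1^2=-1$ and $-K_Y\cdot\Delta_1=1$. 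Now $1=-K_Y\cdot\Delta_1=D\cdot\Delta_1=-\delta_1+\sum_{j\ne 1}\delta_j(\Delta_j\cdot\Delta_1)$, so $\sum_{j\ne 1}\delta_j(\Delta_j\cdot\Delta_1)=1+\delta_1$. Because $\Delta_1$ avoids $P$, all its intersections with the other components occur away from $P$, where $\supp(L)$ is $SNC$ and simply connected by Lemma \ref{lemma-degenerate-fibers}(i); thus the dual graph is a tree and $\Delta_j\cdot\Delta_1\le 1$ for every $j$. Consequently $1+\delta_1\le\sum_{j\ne 1}\delta_j\le d-\delta_1$, i.e. $\delta_1\le\tfrac{d-1}{2}$, which is $<1$ for $d\le 2$, a contradiction.

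The remaining case $P\in\Delta_1$ is, I expect, the main obstacle, because the tree estimate $\Delta_j\cdot\Delta_1\le 1$ breaks down at $P$: the intersections of $\Delta_1$ with the other components may now concentrate at the (possibly non-$SNC$) point $P$, so the bound $D\cdot\Delta_1=-K_Y\cdot\Delta_1\le d$ no longer controls the individual coefficients directly. Here I would bring in the extra information available at $P$: Corollary \ref{cor1} gives $\mult_P D>1$, Lemma \ref{lemma-degenerate-fibers}(iii) gives $\Delta_1\setminus\{P\}\cong\A^1$ (so $\Delta_1$ is unibranch at $P$ and the local intersection numbers $I_P(\Delta_j,\Delta_1)$ are governed by multiplicities at $P$), and the Key Lemma \ref{lem4} pins the discrepancy $a(S;D)=-2$. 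Combining the local computation of $I_P(\Delta_j,\Delta_1)$ in terms of $\mult_P$ with the global budget $\sum_j\delta_j\le d$ should again over-spend the anticanonical degree and produce a contradiction. I also note that the estimate of the previous paragraph is tight when $d=3$ (it yields only $\delta_1\le 1$), so the borderline value $\delta_1=1$ there must be excluded by the same finer analysis of the fibre containing $\Delta_1$; this marginal $d=3$ sub-case together with the case $P\in\Delta_1$ are where the real work lies.
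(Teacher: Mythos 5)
There is a genuine gap: the only case you actually prove is $P\notin\Delta_1$, and the essential case $P\in\Delta_1$ is left as a plan (``should again over-spend the anticanonical degree'') rather than an argument. That case is where all the content lies --- indeed, Lemma~\ref{lem3} (proved later, using the present lemma) shows that \emph{every} component of a member of $\cL$ passes through $P$, so the sub-case you complete is ultimately vacuous. Moreover, the sketch you give for $P\in\Delta_1$ does not obviously close: once $\Delta_1$ is a $(-1)$-curve through $P$ (which does follow for $d=2$ from your numerical budget together with $n\ge 10-d$ from Lemma~\ref{picard}), the identity $D\cdot\Delta_1=1$ forces $\sum_{j\ne 1}\delta_j\,(\Delta_j\cdot\Delta_1)=1+\delta_1\ge 2$ while $\sum_{j\ne 1}\delta_j\le 1$, so some $\Delta_j$ must meet $\Delta_1$ with multiplicity $\ge 2$ at $P$; ruling this out by purely local intersection estimates at $P$ plus the degree budget is not routine, and you give no computation.

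The paper's proof avoids this split entirely and uses a tool absent from your proposal: the Geiser involution. For $d=2$, having shown $\Delta_1$ is a $(-1)$-curve, one sets $C=\tau(\Delta_1)$, so that $\Delta_1+C\sim -K_Y$ and hence $C\qq D-\Delta_1$ with $D-\Delta_1$ effective (since $\delta_1\ge 1$). If $C\not\subset\supp(D)$, intersecting with $C$ gives $C^2\ge 0$, contradicting $C^2=-1$; if $C=\Delta_2$, then $\delta_2<1$ and the relation $(1-\delta_2)\Delta_2\qq(\delta_1-1)\Delta_1+\sum_{i\ge3}\delta_i\Delta_i$ intersected with $\Delta_2$ again yields $\Delta_2^2\ge 0$. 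This is a global argument, insensitive to whether $P\in\Delta_1$. (For $d=1$ your first paragraph essentially suffices once you invoke $n\ge 9$ from Lemma~\ref{picard}; for $d=3$, which Convention~\ref{notation-D} also covers, the paper simply cites \cite[Lemma 4.1.5]{Kishimoto-Prokhorov-Zaidenberg}, and note that your tree estimate in that case only gives $\delta_1\le 1$, as you acknowledge.) To repair your proof you need to supply the involution argument, or some genuine substitute, for the case $P\in\Delta_1$.
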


\begin{proof} For the case $d=3$ see
\cite[Lemma 4.1.5]{Kishimoto-Prokhorov-Zaidenberg}.
Consider the case $d=1$. By Lemma \ref{picard} $n\ge 9$. For any
$i=1,\ldots,n$ we have
$$1=-K_Y\cdot D=\sum_{j=1}^n\delta_j
(-K_Y)\cdot \Delta_j>\delta_i (-K_Y)\cdot \Delta_i\,.$$ Since the
anticanonical divisor $-K_Y$ is ample, it follows that
$\delta_i<1$, as required.

Let further $d=2$. Assuming that $\delta_1\ge 1$
we obtain:
\begin{equation}\label{202}
2=-K_Y\cdot
D=\sum_{i=1}^n\delta_i(-K_Y)\cdot\Delta_i>\delta_1(-K_Y)\cdot
\Delta_1\ge
-K_Y\cdot\Delta_1\,,
\end{equation} where $n\ge 8$ by Lemma \ref{picard}.
It follows that $-K_Y\cdot\Delta_1=1$, i.e. $\Delta_1$ is a
$(-1)$-curve. Then $C:=\tau(\Delta_1)$ is also a $(-1)$-curve,
where $\tau$ is the Geiser involution, and $\Delta_1+C\sim -K_Y$.
If $C\subset\supp(D)$, e.g. $C=\Delta_2$, then by \eqref{202} we
obtain that $\delta_2<1$. Now $\Delta_1+\Delta_2\qq D$ yields a
relation with positive coefficients
$$
(1-\delta_2)\Delta_2\qq (\delta_1-1)\Delta_1+
\sum_{i=3}^n \delta_i\Delta_i.
$$
This implies that $C^2=\Delta_2^2\ge 0$,
a contradiction.

Hence $C\not\subset\supp(D)$. Thus $C\qq D-\Delta_1$, where the
right hand side is effective. This leads to a contradiction as
before.
\end{proof}

\begin{lem}\label{lem3}
\footnote{Cf.\ {\cite[Lemma 4.1.6]{Kishimoto-Prokhorov-Zaidenberg}}.}
 For a member $L$ of $\cL$, any irreducible component of $L$
passes through the base point $P$ of $\cL$.
\end{lem}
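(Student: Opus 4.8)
The plan is to argue by contradiction: suppose some member $L\in\cL$ has an irreducible component $C_0$ with $P\notin C_0$, and derive a contradiction with the structural properties of $\supp(L)$ recorded in Lemma~\ref{lemma-degenerate-fibers}. First I would pin down the numerical type of such a $C_0$. Since $\Bs\cL=\{P\}$, after the resolution $p$ the proper transforms of two distinct members become disjoint fibres of $q$; hence any two members of $\cL$ meet only at $P$, so $\supp(L')\cap\supp(L)=\{P\}$ for a general member $L'$. As $P\notin C_0$ this forces $L'\cap C_0=\varnothing$, and therefore $L\cdot C_0=L'\cdot C_0=0$. Writing $L=a_0C_0+L''$ with $a_0>0$ and $L''\ge 0$ the remaining components, connectedness of $\supp(L)$ (Lemma~\ref{lemma-degenerate-fibers}) gives $L''\cdot C_0>0$, whence $a_0C_0^2=-L''\cdot C_0<0$ and so $C_0^2=-1$ by Remark~\ref{remark-(-1)-curve}; thus $C_0$ is a $(-1)$-curve with $-K_Y\cdot C_0=1$. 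Moreover $C_0$ is not the ``main'' component of $L$: the closure of each $\A^1$-fibre meets the section $S\subset p^{-1}(P)$ at its place at infinity (Notation~\ref{notation-0026}), so its image passes through $P$; hence $C_0\subset\supp(D)$ is one of the boundary curves $\Delta_i$.

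Next I would count the neighbours of $C_0$ in $\supp(L)$. Because $D\qq -K_Y$ we have $D\cdot C_0=-K_Y\cdot C_0=1$, and since $C_0=\Delta_{i_0}$ enters $D$ with coefficient $\delta_{i_0}$ this reads $\sum_{i\ne i_0}\delta_i(\Delta_i\cdot C_0)=1+\delta_{i_0}>1$. As all coefficients satisfy $\delta_i<1$ (Lemma~\ref{lem0}) and $\supp(L)\setminus\{P\}$ is $SNC$ with $C_0$ smooth and away from $P$ (so each $\Delta_i\cdot C_0\in\{0,1\}$), this forces $C_0$ to meet \emph{at least two} other components of $L$; note that the main component does not meet $C_0$ (it meets $\supp(D)$ only at $P$), so all these neighbours lie in $\supp(D)$.

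Finally I would run a purely combinatorial argument on the tree $\supp(L)$. Call a component \emph{central} if it passes through $P$ and \emph{far} otherwise; by the above every far component is a $(-1)$-curve meeting at least two other components, and $\supp(L)$ contains at least one far component. Simple connectedness of $\supp(L)$ (Lemma~\ref{lemma-degenerate-fibers}) means its dual graph has no cycles, so the far components form a forest; let $\mathcal N$ be one of its connected sub-trees, with $V$ vertices and $V-1$ internal edges. Since each far vertex has total degree $\ge 2$ in $\supp(L)$ and $\mathcal N$ has no edges to the other far sub-trees, the degree count $\sum_{v\in\mathcal N}\deg v=2(V-1)+E_{\mathrm{ext}}\ge 2V$ forces at least two edges $E_{\mathrm{ext}}\ge 2$ joining $\mathcal N$ to central components. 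But two such edges produce a loop in $\supp(L)$ (closing up either directly through a common central curve, or through $P$ along two distinct central curves, the inner path across $\mathcal N$ avoiding $P$ because far components miss $P$); this non-contractible cycle contradicts simple connectedness. Hence no far component exists.

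I expect the delicate point to be the last step: making the loop argument rigorous around the single point $P$, where several central components meet simultaneously and the configuration is not $SNC$, so the naive ``dual graph'' needs care. The clean way to handle this is to transfer the combinatorics to $W$, where the fibre $q^{-1}(q(C_0))$ is an honest tree of smooth rational curves and the meeting at $P$ is resolved into the connected exceptional chain $p^{-1}(P)$; there the statements ``$\supp(L)$ has no cycle'', ``each far component has degree $\ge 2$'', and ``a far sub-tree admits at most one external edge'' all become assertions about a genuine tree, so that the degree count and the resulting contradiction go through without ambiguity.
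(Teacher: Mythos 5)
Your argument is correct, but it is organized essentially dually to the paper's, so it is worth comparing the two. The paper first shows that every component of $L$ meeting the far component $C$ must itself pass through $P$ (if a neighbour $C'$ also missed $P$, both would be $(-1)$-curves, $L=C+C'$, and $\cL=|C+C'|$ would be base point free, contradicting Lemma~\ref{lem2}); simple connectedness of $\supp(L)$ then forces $C$ to have \emph{exactly one} neighbour $C'=\Delta_1\subset\supp(D)$, and the computation $1=-K_Y\cdot C=D\cdot C$ yields $\delta_1-\delta_2=1$ or $\delta_1=1$, contradicting Lemma~\ref{lem0}. You run the same two ingredients in the opposite order: the computation $D\cdot C_0=1$ together with $\delta_i<1$ forces \emph{at least two} neighbours, and then the tree structure of $\supp(L)$ supplies the contradiction. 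The paper's shortcut (``a second far $(-1)$-curve would make $\cL$ base point free'') buys the collapse of your entire forest/degree-count analysis: once every neighbour of a far component is known to be central, a single far component with two central neighbours already closes a loop through $P$. Your version avoids that observation at the cost of the combinatorial bookkeeping near $P$, which, as you rightly note, is most cleanly carried out on $W$ where the fibre of $q$ is an honest SNC tree. Two small points to tighten: (a) $\Delta_i\cdot C_0\le 1$ does not follow from the SNC property alone, since two components may meet transversally at two distinct points; you need simple connectedness here as well (a double edge is already a cycle), so this case should be folded into your loop argument; (b) to know that every $\Delta_i$ with $\Delta_i\cdot C_0>0$ is a component of $L$ (so that the dual graph of $\supp(L)$ really records all neighbours of $C_0$), invoke the Remark following Lemma~\ref{lem2} that all components of $D$ lie in fibres of $\Psi$, combined with your observation that distinct members of $\cL$ meet only at $P$.
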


\begin{proof}
Assume to the contrary that there exists a component $C$ of $L$
such that $P\not\in C$. Then clearly ${C}^2<0$
 (see the proof of Lemma \ref{lem2}). Since also
$-K_Y\cdot C>0$, $C$ is a $(-1)$-curve. Let $C'$ be a component
of $L$ meeting $C$. If $P \not \in C'$, then $C$ and $C'$ are
both $(-1)$-curves and so $L=C+C'$. Thus $\cL=|C+C'|$ is base point
free, which contradicts Lemma \ref{lem2}. Hence $C'$ passes
through $P$. Since $P$ is a unique base point of $\cL$, $C$ does
not meet any member $L'\in \cL$ different from $L$.
By Lemma \ref{lemma-degenerate-fibers} $L$ is simply connected,
so  $C'$ is the only component of $L$ meeting $C$.
Note that $\supp(D)$ is connected because $D$ is ample.
Hence
$C'$ must be contained in $\supp(D)$. In fact,
supposing to the contrary that $C'$ is not contained in
$\supp (D)$, the curve $C$ must be contained in $\supp (D)$. Indeed,
the affine surface $U=Y\backslash \supp (D)$ does not contain
any complete curve. Since $\supp (D)$ is connected
there is an irreducible component
of $\supp (D)$ intersecting $C$ and passing through
$P$.
This contradicts Lemma \ref{lemma-degenerate-fibers}.
Thus we may suppose that $C'=\Delta_1$.

If $C\subset\supp(D)$, say, $C=\Delta_2$, then
$$
1=-K_Y \cdot
C=\left(\sum_{i=1}^n \delta_i \Delta_i\right) \cdot
\Delta_2=\delta_1 -\delta_2.
$$
Hence $\delta_1=\delta_2 +1> 1$,
which contradicts Lemma \ref{lem0}.

Therefore $C\not\subset\supp(D)$ and so
$$
1=-K_Y\cdot C= \left(\sum_{i=1}^n \delta_i \Delta_i\right)
\cdot C=\delta_1,
$$
which again gives a contradiction by Lemma \ref{lem0}.
\end{proof}

\section{Proof of Theorem \ref{mainthm}}\label{section-proof}
According to our geometric criterion
\ref{crit}, Theorem \ref{mainthm}
is a consequence of the following proposition.

\begin{prop}\label{yuta1}
Let $Y$ be a del Pezzo surface of
degree $d\le 2$. Then $Y$ does not admit any $(-K_Y)$-polar cylinder.
\end{prop}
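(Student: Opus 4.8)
The plan is to derive a contradiction from the assumption that a $(-K_Y)$-polar cylinder exists on $Y$, by pushing the local analysis of the pair $(Y,D)$ at the base point $P$ to the point where the multiplicity constraints become incompatible with the global bound $-K_Y\cdot D=d\le 2$. By Convention \ref{notation-D} we are in the case $\Bs\cL=\{P\}$, and the three Corollaries and Lemmas of Section \ref{-section-cylinders} give the essential input: $\lfloor D\rfloor=0$ (Lemma \ref{lem0}), $\mult_P(D)>1$ (Corollary \ref{cor1}), every $(-1)$-curve through $P$ lies in $\supp(D)$ (Corollary \ref{cor00121}), and every component of a member $L\in\cL$ passes through $P$ (Lemma \ref{lem3}). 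The strategy is to combine the lower bound $\mult_P(D)>1$ coming from the failure of log canonicity (the Key Lemma) against upper bounds on $\mult_P(D)$ obtained by intersecting $D$ with suitable curves through $P$.

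\emph{First} I would pin down the local geometry at $P$. Since $\mult_P(D)>1$ while each component $\Delta_i$ has $\delta_i<1$, at least two distinct components must pass through $P$, or one component must be singular there; Lemma \ref{lem3} forces \emph{every} component of the special member $L_P\in\cL$ through $P$ to contain $P$, so the fiber $L_P$ of $\cL$ is highly concentrated at $P$. \emph{Next} I would exploit the double cover structure from Theorem \ref{theorem-weak-del-Pezzo} and Lemma \ref{claim}: the anticanonical members are (for $d=2$) pullbacks of lines under $\Phi:Y\to\PP^2$, and the $(-1)$-curves come in Geiser/Bertini pairs $R,\tau(R)$ with $R+\tau(R)\in|-K_Y|$ (Remark \ref{reamrk-Geiser}). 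The aim is to locate a specific low-degree curve $T$ through $P$ — a $(-1)$-curve, or an anticanonical member singular at $P$ provided by the last assertion of Lemma \ref{claim} when $P\in\Phi^{-1}(B)$ — for which $T\cdot D$ can be computed exactly and compared with $\mult_P(D)$. Concretely, if $T\not\subset\supp(D)$ then $\mult_P(D)\le T\cdot D=-rK_Y$-type intersection number, which is small, contradicting $\mult_P(D)>1$ once the right $T$ is chosen; and if $T\subset\supp(D)$ one instead re-runs the linear-algebra argument of Lemma \ref{lem0}, moving $T$ to the other side of the relation $D\qq-K_Y$ to produce an effective divisor with a component of nonnegative self-intersection among the remaining $(-1)$-curves, which is absurd.

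\emph{The main obstacle} I expect is the bookkeeping needed to guarantee that a suitable test curve $T$ through $P$ always exists and that the two cases ($P\in\Phi^{-1}(B)$ versus $P\notin\Phi^{-1}(B)$, and $d=1$ versus $d=2$) are both covered. The degree-one case is the most delicate, since here $-K_Y$ is only a pencil with a single base point $O$ (Theorem \ref{theorem-weak-del-Pezzo}(iii)) and the available intersection numbers are smaller, so the margin in the inequality $\mult_P(D)>1$ is tighter; one likely has to treat separately whether $P=O$ and to use the Bertini involution to supply the needed $(-1)$-curve or the needed anticanonical member singular at $P$. I would organize the endgame as a careful case distinction on the number and type of components of $L_P$ meeting at $P$, using Lemma \ref{lemma-degenerate-fibers} to control that $\supp(L_P)$ is an $SNC$ divisor away from $P$ and that its components are rational, then in each configuration extract either an intersection-theoretic overcount against $-K_Y\cdot D=d$ or a forbidden nonnegative self-intersection, thereby completing the contradiction and establishing the Proposition.
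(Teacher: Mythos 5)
Your plan correctly assembles the preparatory facts ($\mult_P(D)>1$, $\lfloor D\rfloor=0$, Corollary \ref{cor00121}, Lemma \ref{lem3}) and the test-curve idea does drive the $d=1$ endgame in the paper, but there are two genuine gaps where the argument you sketch would fail.

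First, your fallback for the case where the test curve $T\in|-K_Y|$ lies in $\supp(D)$ --- ``re-run the linear-algebra argument of Lemma \ref{lem0}, moving $T$ to the other side to produce a component of nonnegative self-intersection'' --- does not work: that trick needs the isolated component to have negative self-intersection, whereas an anticanonical member has $T^2=d\ge 1$. The paper instead proves Lemma \ref{lemma-e} by a perturbation argument: writing $D=\lambda R+D_{\res}$ and deforming $D$ to $D_\lambda=\frac{1}{1-\lambda}D_{\res}$ through divisors $D_t\qq -K_Y$ with the same support, the continuity of $t\mapsto a(S;D_t)$ forces $a(S;D_\lambda)=-2$, hence $\mult_P(D_\lambda)>1$ with $R$ no longer in the support; then the fact that $R$, as a component of a member of $\cL$, is rational and smooth off $P$ (Lemma \ref{lemma-degenerate-fibers}) forces $\mult_P(R)=2$, giving $d=D_\lambda\cdot R>2$. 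This limiting/discrepancy step is essential and is absent from your proposal. Second, for $d=2$ the test-curve bound is simply not strong enough: an anticanonical $T$ through $P$ with $T\not\subset\supp(D)$ only yields $\mult_P(D)\le T\cdot D=2$, which is compatible with $\mult_P(D)>1$, and every $(-1)$-curve through $P$ is already forced into $\supp(D)$ by Corollary \ref{cor00121}, so no curve on $Y$ gives the needed overcount. The paper closes this case by blowing up $P$, taking the crepant pull-back $D'$ with $\delta_0=\mult_P(D)-1$, locating a non-log-canonical point $P'$ of $(Y',D')$ on the exceptional curve $E'$ (Corollary \ref{lc-klt}), showing the anticanonical member of $Y'$ through $P'$ decomposes as a $(-1)$-curve plus a $(-2)$-curve $C_2'$ with $P'\in C_2'$, and then applying the adjunction-type inequality \cite[Cor.\ 5.57]{Kollar-Mori-19988} to get $\delta_0+D_{\res}'\cdot C_2'>1$, which contradicts $2\delta_1\le 1$. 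This passage to the blow-up and the inversion-of-adjunction estimate are the main ideas of the $d=2$ case, and your plan, which stays on $Y$ and treats the difficulty as ``bookkeeping,'' would not reach a contradiction there.
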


\begin{conv}
We let $Y$ be a del Pezzo
surface of degree $d\le 2$. We assume to the contrary that $Y$
possesses a $(-K_Y)$-polar cylinder $U$ as in (\ref{equation-001}).
By Lemma \ref{lem2} we have $\Bs \cL=\{P\}$.
\end{conv}

\begin{lem}
\label{lemma-e}
For any $R\in |-K_Y|$ we have
$\supp(R)\nsubseteq\supp(D)$.
\end{lem}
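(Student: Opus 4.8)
The plan is to argue by contradiction. Suppose some $R\in|-K_Y|$ has $\supp(R)\subseteq\supp(D)$. By Lemma \ref{claim} the member $R$ is reduced, so $R=\sum_{j\in J}\Delta_j$ for a subset $J\subseteq\{1,\dots,n\}$ of the components of $D$. First I would record the structural constraints on these components. By the remark following Lemma \ref{lem2} every component of $D$ lies in a fiber of $\Psi$, and by Lemma \ref{lem3} every such component passes through the base point $P$; hence each $\Delta_j$, and in particular every component of $R$, passes through $P$. Moreover Lemma \ref{lemma-degenerate-fibers} forces each such component to be rational, smooth away from $P$, with $\Delta_j\setminus\{P\}\cong\A^1$, so that it is unibranch at $P$.

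Next I would translate $\supp(R)\subseteq\supp(D)$ into a numerical relation. Since $R\sim -K_Y\qq D$ and $\lfloor D\rfloor=0$ by Lemma \ref{lem0}, subtracting gives $R-D\qq 0$ with
\[
R-D=\sum_{j\in J}(1-\delta_j)\Delta_j-\sum_{i\notin J}\delta_i\Delta_i=:E_2-E_1,
\]
where $E_1,E_2$ are effective $\Q$-divisors with disjoint supports. Arguing as in the proof of Lemma \ref{lem0}, neither $E_1$ nor $E_2$ can vanish: if $E_1=0$ then $E_2\qq 0$ would be a nonzero effective divisor, impossible since $-K_Y$ is ample; and $E_2=0$ is excluded because $J\neq\emptyset$ while $\delta_j<1$. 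Thus $E_1\qq E_2$ are nonzero effective divisors with disjoint support, all of whose components pass through $P$. This already yields $E_1^2=E_1\cdot E_2\ge \mult_P(E_1)\,\mult_P(E_2)>0$, and combined with the Hodge index inequality $E_1^2\,(-K_Y)^2\le (E_1\cdot(-K_Y))^2$ it produces strong relations between the coefficients $\delta_i$, the local multiplicities at $P$, and $\mult_P(D)$.

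The contradiction should then come from confronting these relations with Corollary \ref{cor1}, i.e.\ $\mult_P(D)>1$. I would split into cases according to Lemma \ref{claim}. If $R$ is irreducible, then $R\setminus\{P\}\cong\A^1$ together with $p_a(R)=1$ forces a cusp at $P$, so $\mult_P(R)=2$; intersecting $E_1\qq E_2=(1-\delta_{j_0})R$ with $R$ and using $\mult_P(E_1)\,\mult_P(R)\le E_1\cdot R$ pins the numerics down to a single borderline inequality. In the remaining case $d=2$ with $R=R_1+R_2$ a sum of two $(-1)$-curves, I would use Remark \ref{reamrk-Geiser} (so $R_2=\tau(R_1)$ and $R_1\cdot R_2=2$) and intersect $E_1\qq E_2$ with each $R_i$, in the spirit of the negative-curve argument of Lemma \ref{lem0}.

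I expect the main obstacle to be closing exactly this borderline case, where the elementary multiplicity and Hodge-index estimates become tight and do not by themselves contradict $\mult_P(D)>1$ (for instance an irreducible cuspidal $R$ whose coefficient $\delta_{j_0}$ is close to $1$). To defeat it I would bring in the sharp discrepancy computation of the Key Lemma \ref{lem4}, namely $a(S;D)=-2$: writing this as $a(S;0)-\sum_i\delta_i\,\mult_S(\Delta_i)$, where $a(S;0)$ is the discrepancy of $S$ over $(Y,0)$, and using that the cusp of $R$ at $P$ (respectively the tangency of $R_1,R_2$) forces the total transform of $R$ to have large multiplicity along $S$, I would bound the offending coefficient $\delta_{j_0}$ from above. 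This extra inequality, unavailable from intersection theory alone, should be what finally contradicts $\mult_P(D)>1$ and completes the proof.
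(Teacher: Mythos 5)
Your proposal takes a genuinely different route (a Zariski-type decomposition $R-D\qq E_2-E_1$ with disjoint supports, plus multiplicity and Hodge-index estimates), but it does not close, and you say so yourself: in the irreducible case one only gets $\mult_P(E_1)\le (1-\delta_{j_0})d/2$ against $\mult_P(E_1)>1-2\delta_{j_0}$, which is no contradiction once $\delta_{j_0}$ is large (and for $d=2$ is never a contradiction). The remedy you propose --- expanding $a(S;D)=a(S;0)-\sum_i\delta_i\,\mult_S(\Delta_i)=-2$ to bound $\delta_{j_0}$ --- is only a plan: it requires knowing $a(S;0)$ and the multiplicities $\mult_S(\Delta_i)$, which depend on the combinatorics of the whole resolution tower $p:W\to Y$, precisely the information the paper never computes. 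So as it stands there is a genuine gap, and it is exactly at the borderline case you identify.

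The idea you are missing is the paper's perturbation trick. Write $D=\lambda R+D_{\res}$ with $\lambda$ maximal, and consider the segment $D_t:=D-tR+\frac{t}{1-\lambda}D_{\res}\qq -K_Y$. For every $t<\lambda$ one has $\supp(D_t)=\supp(D)$, so the same cylinder and the same pencil $\cL$ serve for $D_t$, and the Key Lemma \ref{lem4} gives $a(S;D_t)=-2$ for all $t<\lambda$; since $t\mapsto a(S;D_t)$ is continuous (affine in $t$), the limit $D_\lambda=\frac{1}{1-\lambda}D_{\res}$ still satisfies $a(S;D_\lambda)=-2$, hence $(Y,D_\lambda)$ is not log canonical at $P$ and $\mult_P(D_\lambda)>1$. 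The point is that $D_\lambda$ no longer contains $R$ (resp.\ a chosen component $R_1$ of $R$) in its support, so one can intersect: $2\ge K_Y^2=D_\lambda\cdot R\ge\mult_P(D_\lambda)\mult_P(R)>2$ when $R$ is irreducible and cuspidal at $P$, and $1=D_\lambda\cdot\Delta_1\ge\mult_P(D_\lambda)>1$ in the reducible case. This transfers the non-lc property from $D$ to a boundary avoiding $R$, which is exactly the leverage your intersection-theoretic estimates lack; without it, or without actually carrying out the $\mult_S$ computation you sketch, the proof is incomplete.
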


\begin{proof}
Suppose to the contrary that $\supp(R)\subset\supp(D)$.
Let $\lambda\in \Q_{>0}$ be maximal such that $D-\lambda R$ is effective.
We can
write
\[
D= \lambda R+ D_{\res}\,,
\] where
$D_{\res}$ is an effective $\Q$-divisor
such that $\supp(R) \nsubseteq\supp(D_{\res})$.
For $t\in\Q_{\ge 0}$ we consider the following linear combination
\begin{equation*}
D_t:= D-tR + \frac{t}{1-\lambda}D_{\res}\qq -K_Y.
\end{equation*}
We have $D_0=D$ and $D_\lambda=\frac{1}{1-\lambda}D_{\res}$.
For $t<\lambda$, the $\Q$-divisor $D_t$ is effective
with $\supp(D_t)= \supp(D)$.
By Lemma \ref{lem4} applied to $D_t$ instead of $D$,
for any $t <\lambda$ the pair $(Y, D_t)$ is not
log canonical at $P$, with discrepancy $a(S; D_t)=-2$. Since
the function $t \mapsto a(S; D_t)$ is continuous, passing to the
limit we obtain $a(S; D_{\lambda})=-2$. Hence the pair
$(Y,D_{\lambda})$ is not log canonical at $P$ either and so
$\mult_P (D_\lambda)>1$.

Assume that $R$ is irreducible.
Since $R\subset \supp(D)$, $R$ is a component of
a member of $\cL$. Hence the curve $R$ is smooth outside $P$  and rational
(see Lemma \ref{lemma-degenerate-fibers}(ii)).
Since $p_a(R)=1$, $R$ is singular at $P$ and
$\mult_P (R)=2$. Since $R$ is different from the components of
$D_\lambda$ and $\mult_P (D_\lambda)>1$ we obtain
\begin{equation}
\label{equation-2}
2\ge K_Y^2= D_\lambda\cdot R\ge \mult_P (D_\lambda)\mult_P (R)>2,
\end{equation}
a contradiction.

Let further $R$ be reducible.
By Lemma \ref{claim} we have $d=2$ and $R=R_1+R_2$, where, say, $R_i=\Delta_i$,
$i=1$, $2$, are $(-1)$-curves passing through $P$ (see Lemma \ref{lem3}).
We may assume that $\delta_1\le \delta_2$ and so
$\lambda=\delta_1$.
Since $\Delta_1$ is not a component of $D_\lambda$ we obtain
\[
1= -K_Y\cdot R_1=D_\lambda\cdot \Delta_1\ge \mult_P
(D_\lambda)>1,
\]
a contradiction. This finishes the proof.
\end{proof}

\begin{proof}[Proof of Proposition \textup{\ref{yuta1}} in the case $d=1$.]
Since $\dim |-K_Y|=1$ there is $C\in |-K_Y|$ passing through $P$.
Furthermore, by Lemma \ref{claim}
 $C$ is
irreducible.
By Lemma \ref{lemma-e} $C$ is not contained in $\supp(D)$.
Likewise in \eqref{equation-2} we get  a contradiction. Indeed,
by Corollary \ref{cor1} we have
$$1=C^2=D\cdot C\ge\mult_PD\cdot\mult_PC>1\,.$$
\end{proof}

\begin{conv}
We assume in the remaining part that $d=2$.
\end{conv}

\begin{lem}\label{222}
A member $R\in |-K_Y|$ cannot be singular at $P$.
\end{lem}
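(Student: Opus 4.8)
The plan is to argue by contradiction: assume some member $R\in|-K_Y|$ is singular at $P$ and split into the two cases provided by Lemma \ref{claim}, according to whether $R$ is irreducible or not. In both cases the strategy is to confront the two local facts we have already established at $P$ — namely $\mult_P(D)>1$ (Corollary \ref{cor1}) and $\supp(R)\nsubseteq\supp(D)$ (Lemma \ref{lemma-e}) — with the numerical identity $D\cdot R=(-K_Y)^2=K_Y^2=2$, which holds because $D\qq -K_Y$ and $R\sim -K_Y$.

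First I would dispose of the irreducible case. Here $R$ is singular at $P$, so $\mult_P(R)\ge 2$ (in fact $R$ is rational with $p_a(R)=1$, so the singularity is a node or cusp with $\mult_P(R)=2$). Since $R$ is irreducible and, by Lemma \ref{lemma-e}, is not a component of $D$, the divisors $D$ and $R$ have no common component, and the local intersection number at $P$ bounds the global one from below. This gives
\[
2=K_Y^2=D\cdot R\ \ge\ \mult_P(D)\cdot\mult_P(R)\ >\ 1\cdot 2=2,
\]
a contradiction. This is precisely the mechanism already used in \eqref{equation-2} and in the case $d=1$.

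The remaining, more delicate, case is when $R$ is reducible. By Lemma \ref{claim} the only possibility for $d=2$ is $R=R_1+R_2$ with $R_1,R_2$ being $(-1)$-curves and $R_2=\tau(R_1)$. Since the $R_i$ are smooth, $\Sing(R)=R_1\cap R_2$ as a set; hence $R$ being singular at $P$ forces $P\in R_1\cap R_2$, so both $(-1)$-curves $R_1$ and $R_2$ pass through $P$. The main obstacle here is that the multiplicity inequality of the previous paragraph is no longer available, because $R$ may share a component with $D$, so the term $\mult_P(D)\cdot\mult_P(R)$ can no longer be separated out of $D\cdot R$.

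To get around this I would invoke Corollary \ref{cor00121}, which says that every $(-1)$-curve through $P$ is a component of $D$. Applying it to each of $R_1$ and $R_2$ yields $\supp(R)=R_1\cup R_2\subseteq\supp(D)$, in direct contradiction with Lemma \ref{lemma-e}. Thus neither case can occur, which completes the proof of the lemma.
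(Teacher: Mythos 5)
Your proposal is correct and follows essentially the same route as the paper: the irreducible case is handled by the multiplicity estimate $2=D\cdot R\ge\mult_P(D)\,\mult_P(R)>2$ exactly as in \eqref{equation-2}, and the reducible case by combining Corollary \ref{cor00121} with Lemma \ref{lemma-e}. Your explicit remark that $\Sing(R)=R_1\cap R_2$ forces both $(-1)$-curves through $P$ is a point the paper leaves implicit, but the argument is the same.
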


\begin{proof}
Assume that $P\in \Sing(R)$. By Lemma \ref{claim} we have two
possibilities for $R$. Suppose first that $R$ is irreducible. By
Lemma \ref{lemma-e} $R\not\subset\supp(D)$ and we get a
contradiction likewise in \eqref{equation-2}. In the second case
$R=R_1+R_2$, where $R_1$ and $R_2$ are $(-1)$-curves passing
through $P$. Hence $R_1,\, R_2\subset\supp(D)$ by Corollary
\ref{cor00121}. The latter contradicts Lemma \ref{lemma-e}.
\end{proof}

\begin{nota}
We let $f: Y'\to Y$ be the blowup of $P$ and  $E'\subset Y'$
be the exceptional
divisor. By Lemma \ref{WDP} $Y'$ is a weak del Pezzo surface of degree $1$.
\end{nota}

\begin{sit}\label{newsit}  Applying Proposition \ref{yuta1} with $d=1$,
we can conclude that $Y'$ is not del Pezzo because it contains a
$-K_Y$-polar cylinder.
Indeed, let $D'$ be the crepant
pull-back of $D$ on $Y'$, that is,
\[
K_{Y'}+D'=f^*(K_Y+D)\qquad\text{and}\qquad f_*D'=D\,.
\] Then \begin{equation}\label{begeq}
D'=\sum_{i=1}^6 \delta_i\Delta_i'+\delta_0E',\quad\mbox{where}
\quad \delta_0=\mult_P(D)-1>0\,\end{equation} (see Lemma \ref{cor1})
and $\Delta'_i$ is the proper transform of $\Delta_i$ on $Y'$.
Thus $D'$ is an effective $\Q$-divisor on $Y'$ such that
$D'\sim_\Q-K_{Y'}$ and $Y'\setminus\supp
D'\simeq U\simeq Z\times\A^1$ is a $-K_Y$-polar cylinder. \end{sit}

\begin{lem}\label{corollary-floor-D}
We have $\mult_P(D)<2$ and
$\lfloor D'\rfloor=0$.
\end{lem}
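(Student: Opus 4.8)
My plan is to reduce both assertions to the single inequality $\delta_0<1$. By \eqref{begeq} the coefficients of $D'$ are the numbers $\delta_i$ attached to the proper transforms $\Delta_i'$, together with $\delta_0=\mult_P(D)-1$. Lemma \ref{lem0} already gives $\delta_i<1$ for every $i$, so $\lfloor D'\rfloor=0$ holds as soon as $\delta_0<1$; and $\delta_0<1$ is exactly the statement $\mult_P(D)<2$. Thus everything comes down to proving $\delta_0<1$, and I would work on the weak del Pezzo surface $Y'$ of degree $1$, where $-K_{Y'}$ is nef with $K_{Y'}^2=1$.

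The first step is an easy nef estimate. Since $D'\qq -K_{Y'}$, intersecting with $-K_{Y'}$ and using $(-K_{Y'})\cdot E'=1$ gives
\[
1=K_{Y'}^2=(-K_{Y'})\cdot D'=\delta_0(-K_{Y'})\cdot E'+\sum_i\delta_i\,(-K_{Y'})\cdot\Delta_i'
=\delta_0+\sum_i\delta_i\,(-K_{Y'})\cdot\Delta_i'.
\]
Each term $(-K_{Y'})\cdot\Delta_i'$ is $\ge 0$ by nefness, so $\delta_0\le 1$. Hence the whole problem is the borderline case, and I would next rule out the equality $\delta_0=1$.

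Suppose $\delta_0=1$. Then $\sum_i\delta_i(-K_{Y'})\cdot\Delta_i'=0$ with all $\delta_i>0$, so $(-K_{Y'})\cdot\Delta_i'=0$ for every $i$, i.e.\ every $\Delta_i'$ is a $(-2)$-curve by Remark \ref{remark-(-1)-curve}. A short numerical check — via adjunction, the Hodge index theorem, and Lemma \ref{222} (no member of $|-K_Y|$ is singular at $P$) — should show that a component whose proper transform is a $(-2)$-curve must be a $(-1)$-curve meeting $P$ with multiplicity one, so that $\Delta_i'\cdot E'=1$ for all $i$. The irreducible $(-2)$-curves on $Y'$ lie in the negative-definite lattice $K_{Y'}^{\perp}$ of rank $\rho(Y')-1=8$, hence are linearly independent and at most $8$ in number; combined with Lemma \ref{picard}, which gives at least $10-d=8$ components, this forces exactly $8$ of them. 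Intersecting $D'=\sum_{i=1}^{8}\delta_i\Delta_i'+E'$ with $E'$ then yields $\sum_i\delta_i=2$, while intersecting with each $\Delta_k'$ (where $D'\cdot\Delta_k'=(-K_{Y'})\cdot\Delta_k'=0$) and summing over $k$ gives
\[
0=\sum_{k}D'\cdot\Delta_k'
=\sum_i\delta_i\bigl((\Delta_i')^2+\textstyle\sum_{k\ne i}\Delta_i'\cdot\Delta_k'\bigr)+\sum_k E'\cdot\Delta_k'
=\sum_i\delta_i(d_i-2)+8
=\sum_i\delta_i d_i+4,
\]
where $d_i=\sum_{k\ne i}\Delta_i'\cdot\Delta_k'\ge 0$. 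This forces $\sum_i\delta_i d_i=-4<0$, which is absurd. The contradiction yields $\delta_0<1$ and hence both assertions.

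The \emph{main obstacle} is precisely this equality case $\delta_0=1$: the nef estimate alone only produces $\mult_P(D)\le 2$, and strictness must be extracted from the geometry of $Y'$. Within it the delicate point is to guarantee $\Delta_i'\cdot E'=1$, i.e.\ that each component passes through $P$ with multiplicity exactly one; this is where Lemma \ref{222} is essential, since it rules out anticanonical-type components acquiring a singularity at $P$. Once that is secured, the counting identity $\sum_i\delta_i d_i=-4$ closes the argument cleanly.
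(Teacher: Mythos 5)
Your argument is correct, but it is genuinely different from the one in the paper. The paper first disposes of the case where every component of $D$ is a $(-1)$-curve (then $f$ is a log resolution, and Lemma \ref{lem4} forces the discrepancy $1-\sum\delta_i<-1$, contradicting $\sum\delta_i=-K_Y\cdot D=2$), and otherwise picks a component $\Delta_i$ that is not a $(-1)$-curve, chooses an irreducible $R\in|-K_Y|$ through $P$ and a general point of $\Delta_i$, and gets $2=R\cdot D\ge\mult_P(D)+\delta_i>2$; the second assertion then follows from Lemma \ref{lem0}. You instead work entirely on the weak del Pezzo surface $Y'$: nefness of $-K_{Y'}$ gives $\delta_0\le 1$ at once, and the equality case is excluded by showing every $\Delta_i'$ is a $(-2)$-curve (your ``short numerical check'' does go through: $(-K_Y)\cdot\Delta_i=m_i$ and $\Delta_i^2=m_i^2-2$ force $m_i\le 2$ by Hodge index, and $m_i=2$ would put $\Delta_i$ in $|-K_Y|$ singular at $P$, against Lemma \ref{222}), then invoking the bound of $8$ on the number of irreducible $(-2)$-curves in $K_{Y'}^{\perp}$ together with Lemma \ref{picard} and a counting identity. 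Your route avoids Lemmas \ref{lem4} and \ref{lemma-e} entirely and is more lattice-theoretic; its one terse point is the clause ``hence linearly independent'', which does not follow merely from sitting inside a negative definite lattice --- you should record the standard Zariski-lemma argument (a nontrivial relation $\sum_{c_i>0}c_iC_i=\sum_{c_j<0}(-c_j)C_j$ between distinct irreducible curves would have non-negative self-intersection, impossible in $K_{Y'}^{\perp}$), or cite the fact that the irreducible $(-2)$-curves on a weak del Pezzo surface form a set of simple roots. What the paper's proof buys is independence from this root-system input; what yours buys is that the inequality $\mult_P(D)\le 2$ comes for free from nefness, isolating all the work in a clean borderline case.
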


\begin{proof}
Suppose first that all components of $D$ are $(-1)$-curves. Then
$\Delta_i\cdot \Delta_j=1$ for $i\neq j$ by Remark \ref{reamrk-Geiser}
and Lemma \ref{lemma-e}.
Hence $f$ is a log resolution of the pair $(Y,D)$. Therefore
$1-\sum\delta_i=a(Y,E')<-1$ by Lemma \ref{lem4}, so $\sum\delta_i>2$.
On the other hand
$2=-K_Y\cdot D=\sum\delta_i$, a contradiction. This shows that
there
exists a component $\Delta_i$ of $D$ which is not a $(-1)$-curve.
By the dimension count there exists an effective divisor $R\in
|-K_Y|$ passing through $P$ and a general point $Q\in \Delta_i$.
On the other hand, there is no $(-1)$-curve in $Y$ passing through
$Q$. So by Lemma \ref {claim} we may assume that $R$ is reduced and irreducible.
By Lemma \ref{lemma-e} $R$ is different from the components of
$D$. Assuming that $\mult_P(D)\ge 2$ we obtain
$$2=R\cdot D\ge \mult_P(D)+\delta_i>2,$$ a contradiction. This
proves the first assertion. Now the second follows since
$\delta_0>0$ in (\ref{begeq}).
\end{proof}

\begin{cor}\label{lc-klt}
The pair $(Y',D')$ is Kawamata log terminal in codimension one
and is not log canonical at some point $P'\in E'$.
\end{cor}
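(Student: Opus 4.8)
The plan is to treat the two assertions separately, since each is a short consequence of material already in place. For the first, recall that a pair is Kawamata log terminal in codimension one precisely when every prime component of the boundary occurs with coefficient strictly less than $1$, that is, when $\lfloor D'\rfloor=0$. This is exactly the second conclusion of Lemma \ref{corollary-floor-D}, so the first assertion requires no further argument.

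For the second assertion I would exploit the fact that $D'$ is the crepant pull-back of $D$, so that $K_{Y'}+D'=f^*(K_Y+D)$. A crepant pull-back preserves the discrepancy of every divisor over $Y$ (equivalently over $Y'$); in particular, for the distinguished exceptional curve $S$ of \ref{notation-0026} one has $a(S;D')=a(S;D)$, and the latter equals $-2$ by the Key Lemma \ref{lem4}. Since $-2<-1$, the pair $(Y',D')$ is not log canonical, the failure being witnessed by the valuation of $S$. It then remains only to locate the center of $S$ on $Y'$ and to check that it is a point of $E'$.

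The center of $S$ on $Y$ is the base point $P$, so its center on $Y'$ must lie in the fiber $f^{-1}(P)=E'$; it is therefore either a point of $E'$ or all of $E'$, the latter being the case $S=E'$. To exclude $S=E'$ I would compare discrepancies: for the blowup $f$ of the smooth point $P$ one computes $a(E';D)=1-\mult_P(D)$, and since $\mult_P(D)<2$ by Lemma \ref{corollary-floor-D} this gives $a(E';D)>-1$, whereas $a(S;D)=-2$. Hence $S\neq E'$, so $S$ is extracted by further blowups strictly above $Y'$ and its center $P'$ on $Y'$ is a single point of $E'$. Consequently $(Y',D')$ is not log canonical at $P'\in E'$, as required.

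I expect the only delicate point to be the bookkeeping that keeps the roles of $E'$ and $S$ distinct: although $E'$ by itself has discrepancy exceeding $-1$ and so does not violate log canonicity, the pencil $\cL$ still forces a strictly worse singularity carried by the higher divisor $S$, and this singularity is concentrated at the single point $P'\in E'$. Everything else---the invariance of discrepancies under a crepant pull-back together with the numerical bound $\mult_P(D)<2$---is directly quotable from the preceding lemmas.
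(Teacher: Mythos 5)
Your proposal is correct and follows essentially the same route as the paper: the paper's one-line proof likewise combines Lemma \ref{corollary-floor-D} (giving $\lfloor D'\rfloor=0$, hence klt in codimension one and $a(E';D)>-1$) with the invariance of discrepancies under the crepant pull-back (the cited Lemma 3.10 of Koll\'ar), so that the divisor $S$ with $a(S;D)=-2$ from the Key Lemma witnesses failure of log canonicity at a point of $E'$. Your additional care in ruling out $S=E'$ and pinning the center down to a single point just makes explicit what the paper leaves implicit.
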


\begin{proof} This follows from Lemma \ref{corollary-floor-D} taking into
account that $D'$ is the crepant pull-back of $D$,
see \cite[L.\ 3.10]{Kollar-1995-pairs}.
\end{proof}

Since $\dim |-K_{Y'}|=1$
there exists an element $C'\in |-K_{Y'}|$
passing through the point $P'$ as in Corollary \ref{lc-klt}.

\begin{lem}\label{smooth}
The point $P\in Y$ is a smooth point of the
image $C=f_*C'$.
\end{lem}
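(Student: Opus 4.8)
The plan is to recognize the image $C=f_*C'$ as an anticanonical member on $Y$ and then to apply Lemma~\ref{222}, which forbids members of $|-K_Y|$ from being singular at $P$. First I would check that $C\in|-K_Y|$: since $C'\in|-K_{Y'}|$ and $f_*(-K_{Y'})=f_*\bigl(f^*(-K_Y)-E'\bigr)=-K_Y$, the effective divisor $C=f_*C'$ satisfies $C\qq -K_Y$, so by Lemma~\ref{claim} it is reduced with $p_a(C)=1$.

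Next I would carry out the multiplicity bookkeeping on the blow-up $f$. Writing $C'=\tilde C+bE'$, where $\tilde C$ is the proper transform of $C$ and $b\ge 0$ is the coefficient of $E'$ in $C'$, and using the identity $\tilde C=f^*C-\mult_P(C)\,E'$ together with $C'\qq f^*(-K_Y)-E'$, I would compare the coefficients of $E'$ (legitimate, since $f^*(-K_Y)$ and $E'$ are independent classes on $Y'$) to obtain $\mult_P(C)=1+b$. The crux of the argument is to rule out $b\ge 1$: if $E'\subset C'$ then $\mult_P(C)\ge 2$, so the reduced member $C\in|-K_Y|$ would be singular at $P$, contradicting Lemma~\ref{222}. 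Hence $b=0$ and $\mult_P(C)=1$.

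Finally, $\mult_P(C)=1$ together with the reducedness of $C$ gives at once that $P\in C$ is a smooth point, which is the assertion. The incidence $P\in C$ can also be read off directly from $C'\cdot E'=(-K_{Y'})\cdot E'=1$: since $C'=\tilde C$ does not contain $E'$, it meets $E'$ transversally in a single point, namely $P'$ by the choice of $C'$; thus $\tilde C$ passes through $P'$ and $C$ through $P$ with the prescribed tangent direction. I expect the only real obstacle to be the exclusion of the case $E'\subset C'$, i.e. of an anticanonical curve on $Y$ singular at $P$, which is precisely the content of Lemma~\ref{222}; everything else is routine intersection-theoretic computation on $Y'$.
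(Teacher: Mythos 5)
Your proof is correct and takes essentially the same route as the paper: identify $C=f_*C'$ as a member of $|-K_Y|$ passing through $P$ and invoke Lemma~\ref{222} to forbid a singularity there. The extra bookkeeping with $b$ (ruling out $E'\subset C'$) is not needed for the lemma itself---the paper obtains that fact afterwards, in Corollary~\ref{corollary-E-component-C}, as a consequence of this lemma---but it does no harm.
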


\begin{proof}
This follows by Lemma \ref{222}
since $C\in |-K_Y|$ passes through $P$.
\end{proof}

\begin{cor}\label{corollary-E-component-C}
$E'$ is not a component of $C'$.
\end{cor}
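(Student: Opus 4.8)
The plan is to derive a contradiction by comparing divisor classes on $Y'$, exploiting the two facts already secured: that $C=f_*C'$ lies in $|-K_Y|$, and that $P$ is a \emph{smooth} point of $C$ by Lemma \ref{smooth}. First I would record the two numerical consequences of these facts. Since $C'\in |-K_{Y'}|$ and $f_*(-K_{Y'})=f_*\bigl(f^*(-K_Y)-E'\bigr)=-K_Y$, the push-forward $C=f_*C'$ satisfies $C\sim -K_Y$, so $f^*C\sim f^*(-K_Y)$. Moreover, $P$ being a smooth point of $C$ means precisely $\mult_P(C)=1$.

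Next I would argue by contradiction: assume that $E'$ is a component of $C'$, say with multiplicity $m:=\mult_{E'}(C')\ge 1$, and write $C'=\tilde{C}+mE'$, where $\tilde{C}$ is the strict transform of $C=f_*C'$. Here $\tilde{C}$ carries no exceptional component, since $E'$ is the only $f$-exceptional curve; consequently the standard blow-up formula gives
\[
\tilde{C}=f^*C-\mult_P(C)\,E'=f^*(-K_Y)-E'.
\]
Substituting, we find
\[
C'=f^*(-K_Y)-E'+mE'=f^*(-K_Y)+(m-1)E'.
\]
On the other hand $C'\sim -K_{Y'}=f^*(-K_Y)-E'$. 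Because $\Pic(Y')=f^*\Pic(Y)\oplus\Z E'$, the coefficient of $E'$ is well defined, and comparing the two expressions for the class of $C'$ forces $m-1=-1$, i.e.\ $m=0$, contradicting $m\ge 1$. Hence $E'$ is not a component of $C'$.

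The proof is short because essentially all the geometry has been absorbed into Lemma \ref{smooth}: the one substantive input is that an anticanonical member through $P$ cannot be singular there (Lemma \ref{222}), which is exactly what pins down $\mult_P(C)=1$. The only point I expect to need a line of care is the decomposition $C'=\tilde{C}+mE'$ together with the identification of the non-$E'$ part as the strict transform of $C$; this holds because $f$ is a single blow-up and a birational morphism sends distinct curves to distinct curves, so no exceptional contribution can hide in $\tilde{C}$. As a consistency check one may instead intersect with $E'$: from $C'\cdot E'=-K_{Y'}\cdot E'=1$ and $(E')^2=-1$ the same constraint on $m$ reappears, confirming that no independent obstacle is lurking.
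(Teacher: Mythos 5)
Your proof is correct and follows essentially the same route as the paper: both arguments pin down the $E'$-coefficient by combining $\mult_P(C)=1$ (Lemma \ref{smooth}) with the fact that $C'\sim -K_{Y'}=f^*(-K_Y)-E'$, the paper phrasing this via $f^*C=C'+kE'$ and intersection with $E'$, and you via the strict-transform formula and the decomposition $\Pic(Y')=f^*\Pic(Y)\oplus\Z E'$. The two computations are equivalent, as your own closing ``consistency check'' already observes.
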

\begin{proof}
We can write $f^*C=C'+k E'$ for some $k\in \Z$.
Then $k=- k E'^2= C'\cdot E'=1$.
 By Lemma  \ref{smooth} the coefficient of $E'$ in   $f^*C$ is
equal to $1$ as well. Now  the assertion follows.
\end{proof}

\begin{lem}\label{r}
$C$ is reducible.
\end{lem}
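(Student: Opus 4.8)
The plan is to argue by contradiction: I assume that $C$ is irreducible and then compare the intersection number $D'\cdot C'$ on $Y'$ with the local intersection multiplicity at the non-log-canonical point $P'$. The reason for working on the blow-up $Y'$ rather than on $Y$ is that the degree has dropped to $K_{Y'}^2=1$, which makes the numerical inequality $1\ge \mult_{P'}(D')\cdot\mult_{P'}(C')$ as tight as possible and lets the estimate $\mult_{P'}(D')>1$ coming from Corollary~\ref{lc-klt} force a contradiction.

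First I would record the local structure of $C$ and $C'$ near $P$ and $P'$. Since $C=f_*C'\in|-K_Y|$, Lemma~\ref{claim} shows that $C$ is reduced, and Lemma~\ref{smooth} shows that $C$ is smooth at $P$, so $\mult_P(C)=1$. Because $E'$ is not a component of $C'$ by Corollary~\ref{corollary-E-component-C}, the member $C'$ is precisely the proper transform of $C$; hence $C'$ is smooth at $P'$ and meets $E'$ transversally there, giving $\mult_{P'}(C')=1$. Next I would check that $C'$ is not a component of $D'$: by Lemma~\ref{lemma-e} we have $\supp(C)\nsubseteq\supp(D)$, and since the components of $D'$ are $E'$ together with the proper transforms of the components of $D$, the proper transform $C'$ of $C$ cannot occur among them.

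With these two facts in hand the computation is immediate. Using $D'\sim_\Q-K_{Y'}$ and $C'\sim-K_{Y'}$ we obtain $D'\cdot C'=K_{Y'}^2=1$, while Corollary~\ref{lc-klt} together with the standard multiplicity bound at a non-log-canonical point of a smooth surface (exactly as in the proof of Corollary~\ref{cor1}) gives $\mult_{P'}(D')>1$. Since $C'$ is not a component of $D'$, the local intersection inequality yields
\[
1=D'\cdot C'\ge \mult_{P'}(D')\cdot\mult_{P'}(C')=\mult_{P'}(D')>1,
\]
a contradiction. Therefore $C$ must be reducible.

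The step that I expect to require the most care is identifying $C'$ with the proper transform of $C$ and thereby pinning down $\mult_{P'}(C')=1$: this is the place where reducedness (Lemma~\ref{claim}), smoothness at $P$ (Lemma~\ref{smooth}), and the absence of $E'$ as a component of $C'$ (Corollary~\ref{corollary-E-component-C}) must all be combined correctly, and where the argument would fail if $C'$ had multiplicity greater than $1$ at $P'$, i.e.\ if $C$ were singular at $P$. Everything else is a one-line numerical consequence.
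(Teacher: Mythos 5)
Your proposal is correct and follows essentially the same route as the paper: both reduce to the irreducible curve $C'$ on $Y'$ (via Corollary \ref{corollary-E-component-C}), use $D'\cdot C'=K_{Y'}^2=1$ together with $\mult_{P'}D'>1$ from Corollary \ref{lc-klt}, and derive the contradiction from Lemma \ref{lemma-e}. The paper phrases it as ``$C'$ must then be a component of $D'$, contradicting Lemma \ref{lemma-e}'' while you invoke Lemma \ref{lemma-e} first to exclude that, but this is only a reordering of the same argument.
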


\begin{proof}
Indeed, otherwise $C'$ is irreducible by Corollary \ref{corollary-E-component-C}.
Since $\mult_{P'}D'>1$ by Corollary \ref{lc-klt} and $D'\cdot C'=K_{Y'}^2=1$,
$C'$ is a component of $D'$.
Hence $C$ is a component of $D$. This contradicts Lemma \ref{lemma-e}.
\end{proof}

\begin{lem}\label{225}
We have $C'=C_1'+C_2'$, where
$C_1$ is a $(-1)$-curve, $C_2'$ is a $(-2)$-curve, and
$C_1'\cdot C_2'=2$.
Furthermore,  $P'\in C_2'\setminus  C_1'$, and $C_2=f(C_2')$ is a
$(-1)$-curve.
\end{lem}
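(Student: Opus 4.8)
The plan is to read off the structure of $C'$ from the already-established reducibility of $C$ and then transport everything through the blowup $f$. First, since $C$ is reducible by Lemma \ref{r} and lies in $|-K_Y|$ with $d=2$, Lemma \ref{claim} forces $C=C_1+C_2$, where $C_1,C_2$ are $(-1)$-curves with $C_1\cdot C_2=2$ and $C_2=\tau(C_1)$. These two curves meet precisely along $\Sing(C)=C_1\cap C_2$; since $P$ is a smooth point of $C$ by Lemma \ref{smooth}, it cannot lie on both components. As $C$ passes through $P$ (again Lemma \ref{smooth}), I conclude that $P$ lies on exactly one component, and I relabel the $C_i$ so that $P\in C_2$ and $P\notin C_1$.

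Next I would transfer the picture to $Y'$. By Corollary \ref{corollary-E-component-C} the coefficient of $E'$ in $f^*C$ equals $1$, so $C'=f^*C-E'$. Writing $C_i'$ for the proper transform of $C_i$ and using $\mult_P C_2=1$, $\mult_P C_1=0$, one has $f^*C_2=C_2'+E'$ and $f^*C_1=C_1'$, whence $f^*C=C_1'+C_2'+E'$ and therefore $C'=C_1'+C_2'$. A direct intersection computation then gives $C_2'^2=C_2^2-(\mult_P C_2)^2=-2$, so $C_2'$ is a $(-2)$-curve, while $C_1'^2=C_1^2=-1$, so $C_1'$ is a $(-1)$-curve. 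Moreover $f(C_2')=C_2$ is a $(-1)$-curve on $Y$, and likewise $C_1'\cdot C_2'=C_1\cdot C_2-\mult_P C_1\cdot\mult_P C_2=2$.

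Finally I must locate the non-log-canonical point $P'$. Since $P\notin C_1$, the proper transform $C_1'$ meets $E'$ in $f^*C_1\cdot E'=0$ points, i.e.\ $C_1'$ is disjoint from $E'$; as $P'\in E'$, this yields $P'\notin C_1'$. On the other hand $C'=C_1'+C_2'$ passes through $P'$ by the very choice of $C'$, so necessarily $P'\in C_2'$, and hence $P'\in C_2'\setminus C_1'$.

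I expect this last localization of $P'$ to be the only genuinely delicate step; the remainder is bookkeeping with intersection numbers under a single blowup. The point is that it is \emph{which} component carries $P'$ that matters: identifying $P'$ with a point of the $(-2)$-curve $C_2'$ (rather than the $(-1)$-curve $C_1'$) is exactly the structural input used in the sequel, and it hinges on the observation that the proper transform of the component of $C$ missing $P$ is disjoint from the exceptional divisor.
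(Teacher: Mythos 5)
Your proposal is correct and follows essentially the same route as the paper: decompose $C=C_1+C_2$ via Lemma \ref{claim}, use Lemma \ref{smooth} to place $P$ on exactly one component, and then read off the intersection numbers and the location of $P'$ from the blowup, using that the proper transform of the component missing $P$ is disjoint from $E'$. The paper's own proof is just a terser version of this (it additionally records, for later use, that $C_2\subset\supp(D)$ while $C_1$ is not, but that is not part of the statement).
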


\begin{proof}
Since $C$ is reducible and $C\in |-K_Y|$, by Lemma \ref{claim}
$C=C_1+C_2$, where $C_1$, $C_2$ are $(-1)$-curves with $C_1\cdot
C_2=2$. By Lemma \ref{smooth} $P\notin C_1\cap C_2$,  where
$C_2$ is a component of $D$ by Corollary \ref{cor00121}, while by
Lemma \ref{lemma-e} $C_1$ is not. So we may assume that $P\in
C_2\setminus C_1$. Now the lemma follows from Corollary \ref{lc-klt}.
\end{proof}

\begin{sit}
\label{1226} Letting in the sequel $C_2=\Delta_1$ we can write
$D=\delta_1 C_2+D_{\res}$, where $\delta_1>0$, $D_{\res}$ is an
effective $\Q$-divisor, and $C_2$ is not a component of
$D_{\res}$. Similarly $$D'=\delta_1
C_2'+D_{\res}'+\delta_0 E',$$ where $D_{\res}'$ is the proper
transform of $D_{\res}$ and $\delta_0=\operatorname{mult}_P(D)-1$
(cf.\ (\ref{begeq})).
\end{sit}

\begin{lem}\label{227}
We have $2\delta_1\le 1$.
\end{lem}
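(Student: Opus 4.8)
The plan is to obtain the bound $2\delta_1\le 1$ from a single intersection computation, pairing the $\Q$-divisor $D$ with the $(-1)$-curve $C_1$. All the ingredients are already in place: by \ref{1226} we have the decomposition $D=\delta_1 C_2+D_{\res}$ with $C_2=\Delta_1$ and $C_2$ not a component of $D_{\res}$, by Lemma \ref{225} the anticanonical member $C=C_1+C_2$ is reducible with $C_1\cdot C_2=2$, and, crucially, $C_1$ is a $(-1)$-curve which is \emph{not} a component of $D$, hence not a component of $D_{\res}$.

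First I would record the two numbers needed. Since $C_1$ is a $(-1)$-curve, Remark \ref{remark-(-1)-curve} gives $-K_Y\cdot C_1=1$, and as $D\qq -K_Y$ this yields $D\cdot C_1=1$; moreover $C_1\cdot C_2=2$ by Lemma \ref{claim} applied to $C=C_1+C_2\in|-K_Y|$. Expanding $D\cdot C_1$ via the decomposition of $D$ then gives
\[
1=D\cdot C_1=\delta_1\,(C_1\cdot C_2)+D_{\res}\cdot C_1=2\delta_1+D_{\res}\cdot C_1.
\]
Because $C_1$ is an irreducible curve not contained in $\supp(D_{\res})$, the intersection $D_{\res}\cdot C_1$ with the effective divisor $D_{\res}$ is nonnegative. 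Hence $2\delta_1=1-D_{\res}\cdot C_1\le 1$, which is the assertion.

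There is no serious obstacle here; the only point requiring care is the inequality $D_{\res}\cdot C_1\ge 0$, which rests entirely on $C_1\nsubseteq\supp(D_{\res})$. This is precisely what Lemma \ref{225} supplies (through Lemma \ref{lemma-e}), so the estimate follows at once. If one prefers to stay on $Y'$ in order to match the subsequent argument, the same computation runs against the $(-1)$-curve $C_1'=f^*C_1$, using $D'\cdot C_1'=-K_{Y'}\cdot C_1'=1$, $C_1'\cdot C_2'=2$, and $C_1'\cdot E'=0$; it produces the identical bound $2\delta_1\le 1$.
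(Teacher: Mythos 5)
Your proof is correct and is essentially identical to the paper's: the authors also deduce $2\delta_1\le 1$ from $0\le D_{\res}\cdot C_1=(D-\delta_1 C_2)\cdot C_1=1-2\delta_1$, using that $C_1$ is a $(-1)$-curve not contained in $\supp(D)$ and $C_1\cdot C_2=2$. No differences worth noting.
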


\begin{proof} This follows from
\[
0\le D_{\res}\cdot C_1=(D-\delta_1 C_2)\cdot C_1=1-2\delta_1\,.
\]
\end{proof}

\begin{lem}\label{228} In the notation as before
$\delta_0+D_{\res}'\cdot C_2'>1$.
\end{lem}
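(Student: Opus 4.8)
The plan is to derive the inequality from the failure of log canonicity of $(Y',D')$ at $P'$ established in Corollary \ref{lc-klt}, by restricting the pair to the curve $C_2'$ via inversion of adjunction. First I would fix the local geometry at $P'$. By Lemma \ref{225} the point $P'$ lies on $C_2'\setminus C_1'$, and $C_2'$ is a $(-2)$-curve, hence smooth (in particular smooth at $P'$); moreover $C_2=f(C_2')$ is smooth at $P$ with $P\in C_2\setminus C_1$ by Lemma \ref{smooth}, so its proper transform $C_2'$ meets the exceptional curve $E'$ transversally at the single point $P'$. Consequently $E'\cdot C_2'=(E'\cdot C_2')_{P'}=1$. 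By \ref{1226} I write $D'=\delta_1 C_2'+B$ with $B=\delta_0 E'+D_{\res}'$, where neither $C_2'$ nor $E'$ is a component of $D_{\res}'$ (the latter being the proper transform of $D_{\res}$, which does not contain $C_2$); thus $D_{\res}'\cdot C_2'\ge 0$ and $B\cdot C_2'=\delta_0+D_{\res}'\cdot C_2'$.

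The next step is to pass to a boundary of coefficient one along $C_2'$. Since $\lfloor D'\rfloor=0$ by Lemma \ref{corollary-floor-D}, we have $\delta_1<1$; as enlarging an effective boundary can only decrease discrepancies, the pair $(Y',C_2'+B)$ is again not log canonical at $P'$. Now I apply the surface version of inversion of adjunction on the smooth surface $Y'$: because $C_2'$ is smooth at $P'$ and is not a component of $B$, the pair $(Y',C_2'+B)$ is log canonical at $P'$ if and only if $(C_2',B|_{C_2'})$ is, that is, if and only if the coefficient $(B\cdot C_2')_{P'}$ of $P'$ in $B|_{C_2'}$ is at most $1$ (cf.\ \cite{Kollar-1995-pairs}). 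Since $(Y',C_2'+B)$ is not log canonical at $P'$, I conclude $(B\cdot C_2')_{P'}>1$.

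Finally I compare the local and global intersection numbers. As $C_2'$ is not a component of the effective divisor $B$, every local intersection of $B$ with $C_2'$ is non-negative, so
\[
\delta_0+D_{\res}'\cdot C_2'=B\cdot C_2'\ge (B\cdot C_2')_{P'}>1,
\]
which is exactly the assertion.

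The main obstacle is the careful application of the localized inversion of adjunction: one must check all of its hypotheses at $P'$ --- smoothness of $C_2'$ there, that $C_2'$ and $E'$ are distinct prime divisors neither of which occurs in $D_{\res}'$, and transversality so that the whole of $E'\cdot C_2'=1$ is concentrated at $P'$ --- and one must use the ``at the point $P'$'' rather than merely the ``in a neighbourhood of $C_2'$'' form of the statement. In the present surface setting this amounts to the elementary fact that $(C_2',B|_{C_2'})$ fails to be log canonical at $P'$ precisely when $(B\cdot C_2')_{P'}>1$; the rest is bookkeeping with the intersection data recorded in \ref{1226}.
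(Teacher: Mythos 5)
Your argument is correct and is essentially the paper's own proof: both replace $\delta_1 C_2'$ by $C_2'$ (using $\delta_1<1$, so the pair $(Y',C_2'+D_{\res}'+\delta_0E')$ is still not log canonical at $P'$) and then apply the surface form of inversion of adjunction (Corollary 5.57 in Koll\'ar--Mori) to get a local intersection number $>1$ along $C_2'$ at $P'$. The only, harmless, difference is that you bound the global intersection number from below by the local one at $P'$, whereas the paper first verifies that $C_2'$ meets $E'$ and $\supp(D_{\res}')$ only at $P'$, so that the local and global numbers coincide.
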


\begin{proof}
Let us show first that $\{P'\}=C_2'\cap E'=C_2'\cap\supp(D'_{\res})$.
Indeed, $P'\in E'$ by construction, $P'\in C_2'$ by
Lemma \ref{225}, and $P'\in \supp(D'_{\res})$
because otherwise $P'$ would be a node of $D'$ (indeed, $E'$ meets $C_2'$
transversally at $P'$) and so the pair $(Y',D')$ would be log canonical at
$P'$ contrary to Corollary \ref{lc-klt}. On the other hand,  the curves
$C_2'$ and $D'_{\res}$ have only one point in common
by Lemma  \ref{lemma-degenerate-fibers}(i).

Since $\delta_1<1$ the pair $(Y', C_2'+D_{\res}'+\delta_0 E')$ is
not log canonical at $P'$. Now applying \cite[Corollary
5.57]{Kollar-Mori-19988} we obtain
\[
1< (D_{\res}'+\delta_0 E')\cdot C_2'= \delta_0+D_{\res}'\cdot C_2',
\]
as stated.
\end{proof}

\begin{proof}[Proof of Proposition \textup{\ref{yuta1}} in the case $d=2$.]
We use the notation as above. Since $C_2'$ is a
$(-2)$-curve, by virtue of Lemmas \ref{227} and \ref{228} we obtain
\[
1-\delta_0<D_{\res}'\cdot C_2'= (D'-\delta_1
C_2'-\delta_0 E') \cdot C_2'= 2\delta_1-\delta_0\le
1-\delta_0,
\]
a contradiction. Now the proof of
Proposition \ref{yuta1} is completed.
\end{proof}

\begin{rem}\label{che} Our proof of Proposition \ref{yuta1}
goes along the lines of
that of Lemmas 3.1 and 3.5 in \cite{Cheltsov-log-can}. \footnote{
We are grateful to Ivan Cheltsov who attracted our attention to
this fact.} However, this proposition does not follow immediately
from the results in \cite{Cheltsov-log-can}. Indeed, in notation
of \cite{Cheltsov-log-can}  by Lemma \ref{lem4} we have
$\operatorname{lct}(Y,D)<1$. This is not sufficient to get a
contradiction with \cite[Theorem 1.7]{Cheltsov-log-can}. The point
is that our boundary $D$ is not arbitrary, in contrary, it is
rather special (see Lemma \ref{lemma-degenerate-fibers}).
\end{rem}

\subsection*{ Acknowledgements. }
This work was done during a stay of the second and third authors
at the Max Planck Institute f\"ur Mathematik at Bonn and a stay of
the first and the second authors at the Institute Fourier,
Grenoble. The authors thank these institutions for their
hospitality and support.

\end{document}